\newcommand{\COLORON}{1}
\newcommand{\NOTESON}{0}
\newcommand{\Debug}{0}
\newcommand{\comment}[1]{}
\newcommand{\COMMENT}[1]{}
\definecolor{darkgray}{rgb}{0.3,0.3,0.3}
\newcommand{\defi}[1]{{\color{darkgray}\emph{#1}}}
\newcommand{\acknowledgement}{\section*{Acknowledgement}}
\newtheorem{proposition}{Proposition}[section]
\newtheorem{theorem}[proposition]{Theorem}
\newtheorem{corollary}[proposition]{Corollary}
\newtheorem{lemma}[proposition]{Lemma}
\newtheorem{observation}[proposition]{Observation}
\newtheorem{conjecture}{{Conjecture}}[section]
\newtheorem{problem}[conjecture]{{Problem}}
\newtheorem{examp}[proposition]{Example}
\newcommand{\FIG}{0}
\newcommand{\note}[1]{ 

	\ 

	{\color{blue} \hspace*{-60pt} NOTE: \color{Turquoise}{\small  \tt \begin{minipage}[c]{1.1\textwidth}  #1 \end{minipage} \ignorespacesafterend }} 
	
	\ 
	
	}
\else \newcommand{\note}[1]{} \fi
\newcommand{\fig}[1]{Figure ``{#1}''}
\else \newcommand{\fig}[1]{Figure~\ref{#1}} \fi
\renewcommand{\color}[1]{}
\newcommand{\showFig}[2]{
   \begin{figure}[htbp]
   \centering
   \noindent
   \epsfbox{#1.eps}
   \caption{\small #2}
   \label{#1}
   \end{figure}
}
\newcommand{\N}{\ensuremath{\mathbb N}}
\newcommand{\sm}{\backslash}
\newcommand{\sgl}[1]{\ensuremath{\{#1\}}}
\newcommand{\pth}[2]{\ensuremath{#1}\text{--}\ensuremath{#2}~path}
\newcommand{\Lr}[1]{Lemma~\ref{#1}}
\newcommand{\Tr}[1]{Theorem~\ref{#1}}
\newcommand{\Prr}[1]{Pro\-position~\ref{#1}}
\newcommand{\Cr}[1]{Corollary~\ref{#1}}
\newcommand{\Cnr}[1]{Con\-jecture~\ref{#1}}
\newcommand{\hcy}{Hamilton circle}
\newcommand{\Fe}{For every}
\newcommand{\st}{such that}
\newcommand{\sot}{so that}
\newcommand{\ti}{there is}
\newcommand{\tho}{there holds}
\newcommand{\obda}{without loss of generality}
\newcommand{\labtequ}[2]{ \begin{equation} \label{#1} 	\begin{minipage}[c]{0.9\textwidth}  #2 \end{minipage} \ignorespacesafterend \end{equation} }
\newcommand{\mySection}[2]{}
\title{On 3-coloured tournaments}
\author{Agelos Georgakopoulos\thanks{Supported by a GIF grant.}\\[3pt]
Philipp Spr\"ussel
\bigskip \\
  {Mathematisches Seminar}\\
  {Universit\"at Hamburg}\\
  {\small Bundesstr.\ 55, 20146, Germany}
}
\renewcommand{\hcy}{Hamilton cycle}
\renewcommand{\rm}{\ensuremath{R^-(x)}}
\newcommand{\rp}{\ensuremath{R^+(x)}}
\newcommand{\bm}{\ensuremath{B^-(x)}}
\newcommand{\bp}{\ensuremath{B^+(x)}}
\newcommand{\rrm}{\ensuremath{R_r^-}}
\newcommand{\rbm}{\ensuremath{R_b^-}}
\newcommand{\rrp}{\ensuremath{R_r^+}}
\newcommand{\rbp}{\ensuremath{R_b^+}}
\newcommand{\brm}{\ensuremath{B_r^-}}
\newcommand{\bbm}{\ensuremath{B_b^-}}
\newcommand{\brp}{\ensuremath{B_r^+}}
\newcommand{\bbp}{\ensuremath{B_b^+}}
\newcommand{\dor}{\ensuremath{\color{red}\longmapsto_r \color{black}}}
\newcommand{\nodor}{\ensuremath{\color{red}\not\longmapsto_r \color{black}}}
\newcommand{\dob}{\ensuremath{\color{blue}\longmapsto_b\color{black}}}
\newcommand{\nodob}{\ensuremath{\color{blue}\not\longmapsto_b\color{black}}}
\newcommand{\doy}{\ensuremath{\color{\yel}\longmapsto_g\color{black}}}
\newcommand{\nodoy}{\ensuremath{\color{\yel}\not\longmapsto_g\color{black}}}
\newcommand{\dom}{\ensuremath{\longmapsto}}
\newcommand{\nodom}{\ensuremath{\not\longmapsto}}
\newcommand{\odor}{\ensuremath{\color{red}\hookrightarrow_r\color{black}}}
\newcommand{\odob}{\ensuremath{\color{blue}\hookrightarrow_b\color{black}}}
\newcommand{\odoy}{\ensuremath{\color{\yel}\hookrightarrow_g\color{black}}}
\newcommand{\beats}{\to}
\newcommand{\btr}{\color{red} \beats_r\color{black}}
\newcommand{\btb}{\color{blue} \beats_b\color{black}}
\newcommand{\bty}{\color{\yel} \beats_g\color{black}}
\newcommand{\noproof}{\unskip\nobreak\hfill\penalty50\hskip2em\hbox{}\nobreak\hfill%
       $\square$\parfillskip=0pt\finalhyphendemerits=0\par}
\newcommand{\yel}{green}
\begin{document}
\maketitle

\begin{abstract}
We (re-)prove that in every 3-edge-coloured tournament in which no vertex is incident with all colours there is either a cyclic rainbow triangle or a vertex dominating every other vertex monochromatically.
\end{abstract}

\section{Introduction}

It is an easy and well-known fact that in every finite tournament there is a vertex that dominates every other vertex, where we say that $x$ \defi{dominates} $y$ if \ti\ a directed path from $x$ to $y$. Sands, Sauer, and Woodrow~\cite{SSW} generalised this fact to \defi{$2$-coloured} tournaments, i.e.\ tournament the edges of which are coloured with (at most) $2$ colours: they proved that in every finite $2$-coloured tournament there is a vertex that dominates every other vertex monochromatically, where we say that $x$ \defi{dominates} $y$ \defi{monochromatically} if there is a directed path from $x$ to $y$ all edges of which have the same colour. (In fact, their theorem is much more general, and follows from a result about infinite 2-coloured  directed graphs.)

If we allow three or more colours then the situation becomes much more complicated, and the above assertion does not remain true: in the non-transitive tournament on three vertices whose edges have three distinct colours no vertex dominates both other vertices monochromatically. We call such a tournament a \defi{$T_3$}. Motivated by this and other examples, Sands, Sauer, and Woodrow~\cite{SSW} posed the following problem, which they also attribute to Erd\H{o}s.

\begin{problem}[\cite{SSW}] 
\Fe\ $n$, is there a (least) integer $f(n)$ \sot\ every finite $n$-coloured tournament $T$ has a set $S$ of $f(n)$ vertices \st\ for every vertex $y$ of $T$ \ti\ a vertex in $S$ that dominates $y$ monochromatically? In particular, is $f(3)=3$?
\end{problem}

A further related problem they pose is

\begin{conjecture}[\cite{SSW}] \label{conj}
Let $T$ be a finite $3$-coloured tournament. Then $T$ has either a triple of vertices that span a $T_3$ or a single vertex that dominates every other vertex monochromatically.
\end{conjecture}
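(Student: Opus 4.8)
The plan is to prove the statement by induction on $|V(T)|$, treating the absence of a $T_3$ as a standing hypothesis: if $T$ contains a $T_3$ we are immediately done, so we may assume throughout that $T$ has no cyclic rainbow triangle and seek a monochromatic king (a vertex dominating all others monochromatically). The base case is trivial. For the inductive step, fix a vertex $v$; since $T-v$ again has no $T_3$, induction yields either a $T_3$ in $T-v$ (hence in $T$, done) or a monochromatic king $x$ of $T-v$. The first, easy observation drives the whole reduction: if the arc between $x$ and $v$ points from $x$ to $v$, then $x$ dominates $v$ by a path of length one and dominates every other vertex by the inductive hypothesis, so $x$ is a monochromatic king of $T$ and we are done. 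Thus the only surviving case, and the real content, is when $v\to x$ for every choice of $v$ and every monochromatic king $x$ of $T-v$.

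To attack this case I would switch to an extremal formulation. For a vertex $x$ write $R(x)$ for the set of vertices dominated monochromatically by $x$, and choose $x$ with $|R(x)|$ maximum; suppose for contradiction that $R(x)\ne V(T)\setminus\{x\}$. Each vertex $w$ that $x$ fails to reach must satisfy $w\to x$, since otherwise the single arc $x\to w$ would already dominate $w$. The engine of the argument is then to examine, for such a failure $w$, the directed triangles formed by $w$, $x$, and the successive vertices along a monochromatic path from $x$ into $R(x)$: whenever such a triangle is cyclic, the no-$T_3$ hypothesis forces two of its three arcs to share a colour, and I would use these forced colour coincidences either to splice the arc $w\to x$ onto an existing monochromatic path (enlarging the reach of $w$ beyond $|R(x)|$ and contradicting maximality) or to relocate the king to a vertex of strictly larger reach. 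The hoped-for outcome is that no maximal-reach vertex can have a failure, i.e.\ that it is in fact a king.

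Running in parallel, I would try to reduce the general statement to the case in which no vertex is incident with edges of all three colours, where the additional local structure makes the triangle bookkeeping tractable and where a direct argument can be pushed through. Let $Z$ be the set of vertices incident with all three colours. If $Z=\emptyset$ we finish by that tri-chromatic-free case. Otherwise, for $v\in Z$ the no-$T_3$ condition already restricts the colours occurring between the in- and out-neighbours of $v$ (a cyclic triangle through $v$ cannot be rainbow), and the plan would be to contract monochromatically strongly connected sets, or to peel off the vertices of $Z$ one at a time through the inductive reduction above, so as to land in the tri-chromatic-free regime without ever creating a $T_3$.

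The main obstacle is precisely the presence of tri-chromatic vertices, and it is structural rather than computational. The natural tool for rainbow-triangle-free colourings is Gallai's decomposition, but our hypothesis forbids only the \emph{cyclic} rainbow triangle $T_3$ and permits transitive rainbow triangles, so Gallai's theorem does not apply and no global two-coloured quotient is available. Consequently the colour forced to repeat by the no-$T_3$ condition can differ from triangle to triangle, and at a vertex seeing all three colours a monochromatic path can fail to continue in any single colour, which is exactly the situation in which domination need not propagate. Making the triangle coincidences cohere globally, or equivalently bridging the gap from the tri-chromatic-free case to the full statement, is the crux on which the whole proof turns, and I expect it to require a genuinely new structural idea beyond the local analysis sketched above.
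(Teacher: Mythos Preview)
The statement you are attempting to prove is Conjecture~\ref{conj}, which the paper does \emph{not} prove; it remains open. The paper's contribution is Theorem~\ref{3colours}: in any \emph{minimal counterexample} to Conjecture~\ref{conj}, every vertex is incident with all three colours. This yields Theorem~\ref{main} (the special case where no vertex is tri-chromatic) but leaves the general conjecture unresolved.

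Your proposal is not a proof but an outline with an explicitly acknowledged gap, and you have correctly located where that gap lies. Your reduction strategy, however, runs in the wrong direction relative to what is known. You hope to reduce a general tournament to one with no tri-chromatic vertices by ``peeling off the vertices of $Z$ one at a time'' or by contraction. But Theorem~\ref{3colours} says that any minimal counterexample has $Z=V(D)$: \emph{every} vertex is tri-chromatic. So there is nothing tri-chromatic-free to land in, and no peeling process can reach such a tournament without first passing through a tournament that already has a king---at which point you have not reduced the problem, you have merely restated it. In other words, the paper's result does not give you a base for your reduction; it tells you that the entire difficulty is concentrated in the fully tri-chromatic case, and that the tri-chromatic-free case is a genuine side result rather than a stepping stone.

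Your extremal argument (choose $x$ with $|R(x)|$ maximal, study a failure vertex $w$) is natural but, as you note, the no-$T_3$ hypothesis only forbids \emph{cyclic} rainbow triangles, so the forced colour coincidences on cyclic triangles through $w$ need not propagate coherently along a monochromatic path from $x$. The paper's machinery (the Hamilton cycle of Lemma~\ref{genhamilton}, the path-domination Lemma~\ref{xCy}, and Observation~\ref{xxplus}) gives much tighter control in a minimal counterexample than raw triangle-chasing, yet even with those tools the authors could not close the tri-chromatic case. Your sketch, which uses strictly less structure, therefore has a genuine gap precisely where you say it does, and as the paper stands that gap is the open problem itself.
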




Shen Minggang \cite{Minggang} proved a weaker version of \Cnr{conj}, stating that every 3-coloured tournament contains either a rainbow triangle or it has a vertex that dominates every other vertex monochromatically. For a survey about this problem, and tournaments in general, see \cite{JeGuSur}.

In \cite{SaMoMon} \Cnr{conj} was proved for the special case in which each vertex meets at most two of the three colours:

\begin{theorem}[\cite{SaMoMon}]\label{main}
Let $T$ be a 3-coloured tournament in which each vertex is incident with edges of at most two colours. Then $T$ has either a triple of vertices that span a $T_3$ or a single vertex that dominates every other vertex monochromatically.
\end{theorem}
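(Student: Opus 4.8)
The plan is to prove that if $T$ contains no $T_3$, then $T$ has a vertex that dominates every other vertex monochromatically; I argue by induction on $|V(T)|$. The base cases $|V(T)|\le 3$ are immediate: a tournament on at most three vertices with no $T_3$ is either transitive, where its source works, or a directed triangle two of whose edges share a colour, where the tail of the resulting monochromatic path of length two dominates the other two vertices. For the inductive step I may assume that all three colours occur in $T$, since otherwise $T$ is $2$-coloured and the theorem of Sands, Sauer and Woodrow~\cite{SSW} supplies a monochromatically dominating vertex directly.

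Pick any vertex $v$. As $T-v$ contains no $T_3$, the induction hypothesis gives a vertex $u$ dominating every vertex of $T-v$ monochromatically; if $u$ also dominates $v$ monochromatically we are done, so assume not. Then the edge between $u$ and $v$ is directed $v\btr u$ (after renaming colours), and, since $v$ then meets red, it misses blue or green; assume it misses green. Let $R^+(u)$, $B^+(u)$, $Y^+(u)$ be the sets of vertices reached from $u$ by a monochromatic red, blue, green path, so that $V(T)\setminus\{u,v\}=R^+(u)\cup(B^+(u)\setminus R^+(u))\cup(Y^+(u)\setminus(R^+(u)\cup B^+(u)))$. From $v\btr u$ we get $v\dor w$ for all $w\in R^+(u)$ and $v\dor u$, so $v$ dominates $R^+(u)\cup\{u\}$ monochromatically. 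If some $w\in B^+(u)$ has $w\btb v$, then appending the edge $w\btb v$ to a blue $u$–$w$ path yields $u\dob v$, and then $u$ dominates all of $T$; so we may assume no such $w$ exists. Writing $U:=\{w\in V(T)\setminus\{v\}: v\text{ does not dominate }w\text{ monochromatically}\}$, it follows that $U\subseteq (B^+(u)\cup Y^+(u))\setminus R^+(u)$ and that each $w\in U$ satisfies exactly one of: $w\btr v$ (whence $w\btr v\btr u$, so $w\dor u$); or $w\btb v$ with $w\in Y^+(u)$, in which case $w$ meets blue and green and so misses red. If $U=\emptyset$ then $v$ dominates every other vertex monochromatically; so assume $U\neq\emptyset$.

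It remains to derive a contradiction, and I split into two cases. If every $w\in U$ satisfies $w\btb v$, then $U$ consists of red-free vertices, so the sub-tournament on $U\cup\{v\}$ uses only blue and green, and~\cite{SSW} yields a vertex $z$ of it dominating $U\cup\{v\}$ monochromatically within $U\cup\{v\}$; here $z\neq v$ (as $v$ does not dominate $U$ in $T$), so $z\in U$, and since $v$ misses green the monochromatic $z$–$v$ path must be blue, giving $z\dob v$. One then argues — using that $z$ is red-free and that $T$ has no $T_3$, applied to the triangles formed by $z$, $v$ and the last edge of the monochromatic path from $u$ (or from $v$) to each remaining vertex — that $z$ in fact dominates all of $V(T)$ monochromatically, a contradiction. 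In the remaining case some $w\in U$ has $w\btr v$, hence $w\dor u$ and $w\dor v$; examining the triangle on $w$, $v$ and the final vertex of a monochromatic $u$–$w$ path, and using that $v$ and $w$ each miss a colour, one shows that avoiding a $T_3$ forces so much structure that the monochromatically dominated set can be strictly enlarged by passing from $v$ to $w$. Choosing $v$ at the outset so as to maximize the set of vertices it dominates monochromatically then makes this case impossible, completing the induction.

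The step I expect to be the genuine obstacle is the verification, in the first case, that the vertex $z$ coming from the $2$-coloured sub-tournament reaches the vertices that $v$ dominated only through a red path: $z$ is red-free and so cannot traverse any red edge, and one must route to such a vertex through the blue/green structure, invoking $T_3$-freeness repeatedly on the relevant triangles. Two technical points recur throughout and need care: first, monochromatic paths produced by the induction hypothesis can be long, so one should always pass to their last edge to expose an actual triangle; and second, making the ``replace $v$ by $w$'' argument terminate forces the extremal choice of $v$ above. It is precisely here that the hypothesis ``no vertex meets all three colours'' is used most heavily — it is what pins down the colour of each edge between the vertices under consideration and thereby turns the triangles into $T_3$'s.
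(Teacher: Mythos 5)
Your outline is not a proof: both pivotal steps are only asserted, and they are precisely where the whole difficulty of the theorem lives. In your first case you claim that the vertex $z$ supplied by the Sands--Sauer--Woodrow theorem on the two-coloured subtournament $U\cup\{v\}$ ``in fact dominates all of $V(T)$''. Since $z$ is red-free, every monochromatic path leaving $z$ is blue or green, so you must exhibit a blue or green path from $z$ to every vertex of $R^+(u)\cup\{u\}$; nothing in your setup forces one. Take $y\in R^+(u)$ with $y\to_b z$, say. The only route you offer is $T_3$-freeness applied to triangles involving $z$, $v$ and ``the last edge of the monochromatic path'' to $y$; but the relevant triangles need not be rainbow or cyclic: the edge between $v$ and $y$ (or between $z$ and the path's penultimate vertex) has unconstrained colour and orientation, since $v$ reaches $y$ only along a possibly long red path through $u$, and vertices of $R^+(u)$ may well meet exactly the colour pairs that make all these triangles innocuous. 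So no contradiction, and no $z$--$y$ path, is forced. The second case has the same character: you need $w$ to dominate \emph{every} vertex that $v$ dominates (plus more) for the extremal choice of $v$ to terminate the argument, but you only establish that $w$ dominates $R^+(u)\cup\{u,v\}$ in red; $v$ may dominate many vertices of $B^+(u)\cup Y^+(u)$ directly or by blue paths that $w$ is not shown to reach, and moreover changing $v$ changes $u$ (it comes from the induction on $T-v$), so the configuration you analysed does not transfer. In short, both ``one then argues'' steps are the theorem itself, and the hypothesis ``no vertex meets all three colours'' does not pin down the colours of the edges you would need it to.

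For contrast, the paper does not attempt a one-vertex-at-a-time induction at all. It takes a minimal counterexample $D$ to the stronger Conjecture~\ref{conj}, shows (Lemma~\ref{genhamilton}) that $D$ has a directed Hamilton cycle $C$ on which every vertex monochromatically dominates everything except its predecessor, and then, for a vertex $x$ missing a colour, analyses the sets $R^\pm(x)$, $B^\pm(x)$ and their refinements to produce, via Proposition~\ref{r} and Corollary~\ref{geil}, an infinite strictly nested sequence of subpaths of $C$, contradicting finiteness. The global structure provided by $C$ (in particular Lemma~\ref{xCy}, that dominations are realised inside segments of $C$) is what substitutes for the local triangle analysis your sketch relies on; the fact that both the original proof in \cite{SaMoMon} and this one need such machinery is a strong indication that the steps you deferred cannot be closed by a short local argument.
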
 

The proof of \Tr{main} in \cite{SaMoMon} contained a long case distinction. It is the main aim of this paper to give an alternative, perhaps more elegant proof of \Tr{main}.
Our proof is elementary, and makes use of an elegant observation of \cite{Minggang} stating that if Conjecture~\ref{conj} is false then any minimal counterexample has a directed Hamilton cycle $C$ \st\ each vertex  monochromatically dominates every other vertex except for its predecessor on $H$, see \Lr{genhamilton}.

\section{The \hcy}

Every tournament in this paper will be finite and 3-coloured; the colours will always be red, blue, and \yel. If a vertex $x$ dominates a vertex $y$ monochromatically we write $x\dom y$. From now on we will sometimes just write \defi{dominates} instead of ``dominates monochromatically''. If the edge between $x$ and $y$ is directed from $x$ to $y$ we say that $x$ \defi{beats} $y$ and write $x\beats y$. We endow the symbols `$\dom$' and `$\beats$' with an index $\color{red}_r$, $\color{blue}_b$, or $\color{\yel}_g$ to assert that the domination or edge is in red, blue, or \yel\ colour respectively. We further write $x\odor y$ if $x$ dominates $y$ \defi{only} in red and $x\nodor$ if $x$ does {not} dominate $y$ in red, and similarly for blue and \yel. If a vertex $x$ dominates all vertices in a tournament, we abbreviate this fact by saying that $x$ \emph{dominates} the tournament.

If $D$ is a tournament and $U$ a subset of its vertices, then we denote by $D[U]$ the subtournament of $D$ spanned by the vertices in $U$.

If $C$ is a directed path or cycle and $x,y$ are two of its vertices then $xCy$ denotes the subpath of $C$ from $x$ to $y$.

For completeness we reprove the following result of Shen Minggang mentioned in the introduction. 

\begin{lemma}[\cite{Minggang}]\label{genhamilton}
  If $D$ is a minimal counterexample (with respect to containment) to Conjecture~\ref{conj} then it has a (unique) directed Hamilton cycle $C$ \st\ each vertex  monochromatically dominates every vertex except for its predecessor on $C$.
\end{lemma}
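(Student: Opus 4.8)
The plan is to exploit minimality in the following form: since $D$ contains no $T_3$, neither does any subtournament of $D$, so by minimality every proper subtournament of $D$ has a vertex dominating it, while $D$ itself has none.

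For each $v\in V(D)$ let $A_v$ be the set of vertices $x$ with $x\dom u$ for all $u\in V(D)\setminus\{v\}$. First I would check that $A_v\neq\emptyset$: the proper subtournament $D-v$ has a vertex $x$ dominating all of $D-v$, and domination in $D-v$ implies domination in $D$, so $x\in A_v$. Next, no $x\in A_v$ satisfies $x\dom v$ (otherwise $x$ would dominate all of $D$); in particular $v\notin A_v$. Moreover the sets $A_v$ are pairwise disjoint, since $x\in A_v\cap A_w$ with $v\neq w$ would dominate $(V(D)\setminus\{v\})\cup(V(D)\setminus\{w\})=V(D)$. As $\{A_v: v\in V(D)\}$ is a family of $|V(D)|$ nonempty pairwise disjoint subsets of the $|V(D)|$-element set $V(D)$, each $A_v$ is a singleton $\{x_v\}$, and $\pi\colon v\mapsto x_v$ is a bijection of $V(D)$.

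Two facts about $\pi$ then finish the proof. (i) For every $v$ the edge between $v$ and $x_v$ points from $v$ to $x_v$, because $x_v\nodom v$ forbids the edge $x_v\beats v$, so $v\beats x_v$. (ii) $\pi$ is a single cyclic permutation. Indeed, the functional digraph of $\pi$ is a disjoint union of directed cycles; if $U\subsetneq V(D)$ is the vertex set of one of them, then $U$ is $\pi$-invariant, hence $\pi^{-1}(z)\in U$ for all $z\in U$, while $D[U]$ is a proper subtournament and so has a vertex $z$ dominating all of $U$ — contradicting $x_{\pi^{-1}(z)}=z$, i.e.\ $z\nodom\pi^{-1}(z)$ (note $\pi^{-1}(z)\neq z$ since $z\notin A_z$).

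Finally, writing $n=|V(D)|$ and the orbit of $\pi$ as $v_0,\ v_1=\pi(v_0),\ \dots,\ v_{n-1},\ v_0$, fact (i) makes $C:=v_0\beats v_1\beats\cdots\beats v_{n-1}\beats v_0$ a directed Hamilton cycle of $D$; and since $v_{i+1}=x_{v_i}$ dominates precisely $V(D)\setminus\{v_i\}$, each vertex of $C$ monochromatically dominates every vertex except its predecessor. For uniqueness, observe that in any Hamilton cycle with this property the predecessor of a vertex $v$ must be the unique vertex not dominated by $v$, which pins down the cycle. The only step needing an idea is (ii); the rest is the disjointness-and-counting bookkeeping above.
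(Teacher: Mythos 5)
Your proof is correct, but it follows a genuinely different route from the paper's. The paper proceeds greedily: it first builds some directed cycle on which no vertex dominates its predecessor (by repeatedly stepping to a non-dominated vertex until a repetition occurs), invokes minimality to conclude this cycle must be Hamiltonian, and only then upgrades the weak property to full domination via a shortcut argument (if $v$ fails to dominate a non-consecutive $w$, then $w$ beats $v$ and the cycle $vCw + wv$ is a shorter cycle with the same bad property, again contradicting minimality). You instead apply minimality once to each vertex-deleted subtournament $D-v$ to produce a dominator $x_v$ of $V(D)\setminus\{v\}$, and extract the Hamilton cycle from the permutation structure: the disjointness-and-counting argument showing each $A_v$ is a singleton, fixed-point-freeness, and the single-orbit argument (minimality applied to $D[U]$ for a proper orbit $U$) are all yours, not the paper's. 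What your approach buys is that the full domination property comes for free from the definition of $A_v$, with no separate upgrade step, and the uniqueness of $C$ is transparent since the predecessor of $v$ is forced to be the unique vertex $v$ fails to dominate; what the paper's approach buys is a shorter, more hands-on argument that avoids the bookkeeping with the family $\{A_v\}$ and the orbit analysis. Both hinge on the same engine, namely that every proper subtournament of a minimal counterexample has a monochromatically dominating vertex. (One cosmetic remark: your ``in particular $v\notin A_v$'' is cleaner if stated directly --- $v\in A_v$ would mean $v$ dominates every other vertex, contradicting that $D$ is a counterexample --- rather than routed through the claim about $x\dom v$.)
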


\begin{proof}
  Since no vertex in $D$ dominates every other vertex, it is not hard to find a directed cycle $C$ in $D$ \st\ no vertex in $V(C)$  dominates its predecessor on $C$.
  \COMMENT{Start with a vertex $v_0$, then recursively let $v_{i+1}$ be a vertex not dominated by $v_i$. Stop the construction when $v_i=v_j$ for $i>j$. (One easily sees that $i-j>2$ in that case.) Then $v_i,v_{i-1},\dotsc,v_{j+1}$ form the desired cycle.}%
  It is easy to see that the subtournament $D[V(C)]$ is also a counterexample to Conjecture~\ref{conj}. This, and the minimality of $D$, implies that $C$ is a Hamilton cycle.
  
  Now suppose that the vertices $v,w\in V(C)$ are not consecutive on $C$ and that $v\nodom w$. Then $w \beats v$ must hold, and the union of $vCw$ with the edge  $wv$ is a directed cycle $C'$ shorter than $C$ on which no vertex  dominates its predecessor. By our previous argument, $C'$ contradicts the choice of $D$ as a minimal counterexample since $D[V(C')]$ is also a counterexample. This means that each vertex of $D$  dominates all vertices but its predecessor on the Hamilton cycle $C$. Easily, no other Hamilton cycle of $D$ (up to rotation) can have the latter property.
\end{proof}

It is straightforward to check that every minimal (with respect to inclusion) counterexample to the statement of Theorem~\ref{main} is also a minimal counterexample to Conjecture~\ref{conj}. (Note though, that a {\em minimum} counterexample to the statement of Theorem~\ref{main} need not be a {\em minimum} counterexample to Conjecture~\ref{conj}.) Thus, \Lr{genhamilton} implies

\begin{corollary}\label{hamilton}
  If $D$ is a minimal counterexample to the statement of Theorem~\ref{main} then it has a (unique) directed Hamilton cycle $C$ \st\ each vertex  monochromatically dominates every vertex except for its predecessor on $C$.
\end{corollary}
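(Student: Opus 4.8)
The plan is to reduce Corollary~\ref{hamilton} directly to \Lr{genhamilton} by verifying the parenthetical claim that precedes it: every minimal (with respect to containment) counterexample to the statement of \Tr{main} is also a minimal counterexample to Conjecture~\ref{conj}. Given this, \Lr{genhamilton} applies verbatim and produces the desired Hamilton cycle.

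The only thing that really needs checking is that the hypothesis of \Tr{main} --- that each vertex is incident with edges of at most two of the three colours --- is inherited by subtournaments. This is immediate: if $D'=D[U]$ is a subtournament of $D$, then the set of colours occurring on edges at a vertex $v\in U$ within $D'$ is a subset of the corresponding set within $D$, so it has size at most two whenever $D$ satisfies the hypothesis.

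Now let $D$ be a minimal counterexample to the statement of \Tr{main}. Since $D$ contains no $T_3$ and has no vertex that monochromatically dominates every other vertex, $D$ is in particular a counterexample to Conjecture~\ref{conj}. For minimality, suppose some proper subtournament $D'$ of $D$ were also a counterexample to Conjecture~\ref{conj}. Then $D'$ has no $T_3$ and no monochromatically dominating vertex, and by the inheritance observation above $D'$ still satisfies the colour-degree hypothesis of \Tr{main}; hence $D'$ would be a counterexample to the statement of \Tr{main}, contradicting the minimality of $D$. Therefore $D$ is a minimal counterexample to Conjecture~\ref{conj}.

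Applying \Lr{genhamilton} to $D$ then yields the (unique) directed Hamilton cycle $C$ on which each vertex monochromatically dominates every vertex except its predecessor, which is precisely the assertion of Corollary~\ref{hamilton}. I do not expect any real obstacle here; the only subtlety worth flagging (as the remark following the statement of the corollary already does) is that this reduction genuinely uses minimality \emph{with respect to containment} and would break for \emph{minimum} counterexamples, since a minimum counterexample to \Tr{main} could have strictly more vertices than a minimum counterexample to Conjecture~\ref{conj}.
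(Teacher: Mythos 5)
Your proposal is correct and follows exactly the route the paper takes: the paper derives Corollary~\ref{hamilton} from \Lr{genhamilton} via the (unproved, "straightforward to check") observation that a minimal counterexample to \Tr{main} is a minimal counterexample to Conjecture~\ref{conj}, which you verify via the inheritance of the colour-degree hypothesis by subtournaments. Your remark distinguishing minimal from minimum counterexamples matches the paper's own caveat.
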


For the rest of this section let $D$ be a minimal counterexample to Conjecture~\ref{conj} and let $C$ be the Hamilton cycle provided by Lemma~\ref{genhamilton}. In this paper, we will use the results of this section only for the case that $D$ is even a counterexample to the statement of Theorem~\ref{main}, but we state them in greater generality in order to keep them accessible for the general case.

By assumption, no vertex in $D$ dominates every other vertex monochromatically. Conversely, as no vertex in $D$ is dominated by its successor on $C$, no vertex in $D$ is  dominated by every other vertex. It turns out that $D$ is also minimal with that property:

\begin{lemma}\label{reverse}
  $D$ is minimal with the property that it contains no vertex that is dominated by every other vertex.
\end{lemma}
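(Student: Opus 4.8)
The plan is to exploit the symmetry between \emph{dominating} and \emph{being dominated by} given by reversing all edges. For a 3-coloured tournament $T$, let $\overleftarrow{T}$ be the tournament on $V(T)$ obtained by reversing the orientation of every edge while keeping its colour. Reversal is an involution; it turns a monochromatic directed path from $x$ to $y$ into a monochromatic directed path from $y$ to $x$, and it carries a $T_3$ to a $T_3$ (the reverse of a directed triangle is a directed triangle, and the colours are untouched). Hence, for any fixed $T$: a vertex is dominated by every other vertex of $T$ exactly when it dominates every other vertex of $\overleftarrow{T}$, and $T$ contains a $T_3$ exactly when $\overleftarrow{T}$ does. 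The point I would extract from this is: a \emph{minimal} counterexample to Conjecture~\ref{conj} has, by Lemma~\ref{genhamilton}, a directed Hamilton cycle on which every vertex dominates all vertices but its predecessor, so — reading this off for successors — \emph{no vertex of such a tournament is dominated by every other vertex}.

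With this in hand the lemma follows by contradiction. Since $D$ itself contains no vertex dominated by every other vertex (noted just before the lemma), it remains to show that every proper induced subtournament $D' = D[U]$, with $U \subsetneq V(D)$, does contain such a vertex. Suppose not. Then $\overleftarrow{D'}$ contains no vertex dominating every other vertex, and since $D$ — hence $D'$, hence $\overleftarrow{D'}$ — contains no $T_3$, the tournament $\overleftarrow{D'}$ is a counterexample to Conjecture~\ref{conj}. Pick an induced subtournament $E$ of $\overleftarrow{D'}$ on a minimum number of vertices among those that are still counterexamples to Conjecture~\ref{conj}; then $E$ is a minimal counterexample, so by the first paragraph no vertex of $E$ is dominated by every other vertex of $E$. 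Reversing back, put $E' := D[V(E)]$; because $D'$ is induced in $D$ we have $\overleftarrow{E'} = E$, so no vertex of $E'$ dominates every other vertex of $E'$, while $E'$ contains no $T_3$ (being an induced subtournament of $D$). Thus $E'$ is a counterexample to Conjecture~\ref{conj} with $|V(E')| = |V(E)| \le |U| < |V(D)|$, i.e.\ a proper one, contradicting the minimality of $D$.

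I do not expect a genuine obstacle here; the only things to get right are the bookkeeping that reversal really interchanges the two "domination" properties for one fixed tournament, and that $\overleftarrow{E'} = E$ (which uses that $D'$, and hence $E'$, is an \emph{induced} subtournament of $D$). In spirit the lemma just records that Conjecture~\ref{conj} is self-dual under edge-reversal, and then transports the Hamilton-cycle structure of minimal counterexamples supplied by Lemma~\ref{genhamilton} back from $\overleftarrow{D}$ to $D$.
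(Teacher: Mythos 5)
Your proof is correct, but it takes a genuinely different route from the paper's. The paper argues directly and more elementarily: given a proper non-empty subtournament $D_0$, the minimality of $D$ yields a vertex $x_1$ dominating $D_0$, then a vertex $x_2$ dominating $D_0-x_1$, and so on; peeling off dominating vertices one at a time produces an ordering $x_1,\dotsc,x_n$ of $V(D_0)$ in which each $x_i$ dominates all later vertices (domination inside a smaller subtournament persists in $D_0$), so the last vertex $x_n$ is dominated by every other vertex of $D_0$. This uses only the minimality of $D$, repeatedly, and does not need Lemma~\ref{genhamilton} for the subtournament direction. You instead exploit edge-reversal: you observe that reversal preserves $T_3$'s and swaps ``dominates all'' with ``is dominated by all'', note via Lemma~\ref{genhamilton} that a minimal counterexample has no vertex dominated by every other vertex, and then, assuming some proper induced subtournament $D'$ has no dominated-by-all vertex, pass to a minimal counterexample inside $\overleftarrow{D'}$ and reverse back to contradict the minimality of $D$. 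All the steps check out (in particular $\overleftarrow{E'}=E$ because subtournaments of a tournament are induced, and any counterexample to Conjecture~\ref{conj} has at least three vertices, so Lemma~\ref{genhamilton} applies), and there is no circularity since Lemma~\ref{genhamilton} is proved beforehand. What each approach buys: the paper's greedy peeling is shorter and even gives the stronger fact that the vertices of any proper subtournament can be linearly ordered so that each dominates all later ones; your argument makes explicit the self-duality under edge reversal that the paper only exploits later (in Corollary~\ref{geil}, where Lemma~\ref{reverse} is cited precisely to justify reversing all edges), at the cost of invoking the Hamilton-cycle lemma a second time.
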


\begin{proof}
  All that remains to check is that every proper non-empty subtournament $D_0$ of $D$ contains a vertex that is dominated by every other vertex in $D_0$. By the minimality of $D$ as a counterexample to Conjecture~\ref{conj}, \ti\ a vertex $x_1$ that dominates $D_0$. Similarly, \ti\ a vertex $x_2$ that dominates $D_1:= D_0 - x_1$. Continuing like this, we find a sequence $x_1,x_2,\dotsc,x_n$ (with $n=|V(D_0)|$) such that each $x_i$ dominates $D_{i-1}$, where $D_i=D_{i-1}-x_i$. Then $x_n$ is dominated by every vertex in $D_0$.
\end{proof}

Given a vertex $x$, we write $x^+ = x^{+1}$ for its successor and $x^- = x^{-1}$ for its predecessor on $C$. Then, recursively for $i=1,2,\dotsc$, let $x^{+(i+1)}$ be the successor of $x^{+i}$ on $C$ and let $x^{-(i+1)}$ be the predecessor of $x^{-i}$ on $C$.

As a first step towards \Tr{main} we prove 
\begin{proposition}\label{1in}
  $D$ has  no vertex all incoming (or all outgoing) edges of which have the same colour.
\end{proposition}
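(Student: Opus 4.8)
The plan is to argue by contradiction and to reduce everything to one clean case. First, the case ``all outgoing edges of one colour'' reduces to the case ``all incoming edges of one colour'': the reverse tournament $\overleftarrow{D}$ (with the same colouring) again has no rainbow triangle, and, since $D$ contains no vertex dominated by every other vertex, \Lr{reverse} guarantees that $\overleftarrow{D}$ is again a minimal counterexample to \Cnr{conj}, so \Lr{genhamilton} applies to it as well (with the roles of ``dominates'' and ``is dominated'' interchanged); moreover a vertex with all outgoing edges monochromatic in $D$ has all incoming edges monochromatic in $\overleftarrow{D}$. By colour-symmetry we may therefore assume that some vertex $x$ of $D$ has all of its incoming edges red, and we aim for a contradiction. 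Fix the Hamilton cycle $C$ given by \Lr{genhamilton}, and recall that each vertex dominates every other vertex except its predecessor on $C$.

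The first concrete step is to show that the successor $x^+$ of $x$ on $C$ has no outgoing red edge. Since every edge entering $x$ is red, every monochromatic path that ends at $x$ is red. By \Lr{genhamilton}, $x^+$ dominates every vertex other than $x$ but does not dominate $x$; hence $x^+$ has no red path to $x$. Now let $w$ be any out-neighbour of $x^+$. As $x \beats x^+$ we have $w \notin \{x, x^+\}$, so by \Lr{genhamilton} $w$ dominates $x$, necessarily along a red path. If the edge from $x^+$ to $w$ were red, it would extend this to a red path from $x^+$ to $x$, a contradiction. Hence no edge leaves $x^+$ in red, and consequently $x^+$ must dominate every vertex other than $x$ using blue and green paths only.

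The last and main step is to turn this into a genuine contradiction, namely a rainbow triangle or a vertex dominating all of $D$. The natural object to look at is a directed triangle through $x$ and $x^+$. Write $c_1$ for the colour of the edge $x \beats x^+$, and let $R$ be the set of in-neighbours of $x$; all edges from $R$ to $x$ are red, and $x^- \in R$. Any $z \in R$ with $x^+ \beats z$ gives a directed triangle $x \beats x^+ \beats z \beats x$ whose three edge-colours are $c_1$, the colour of the edge from $x^+$ to $z$ (which is blue or green by the previous step), and red; since $D$ has no rainbow triangle, this constrains $c_1$ and the colours of all edges from $x^+$ into $R$. The plan is to combine these constraints with the fact that $x^+$ still dominates all of $R$ (indeed everything except $x$) along blue and green paths, and with the dominating vertex of the subtournament $D[R \cup \{x\}]$ supplied by the minimality of $D$, in order either to force one of the above triangles to be rainbow or to promote $x^-$ to a vertex dominating all of $D$. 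This last step is the main obstacle: ``$x^+$ has no red out-edge'' is a purely local statement about $x^+$ that does not propagate to other vertices, so the contradiction has to be extracted from the interplay between the Hamilton cycle $C$, the red in-star at $x$, and the absence of rainbow triangles.
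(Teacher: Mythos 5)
Your reduction of the ``all outgoing'' case to the ``all incoming'' case via the reversed tournament is fine (and matches the paper), and your first concrete step --- that $x^+$ has no red out-edge, because any out-neighbour $w\ne x,x^+$ dominates $x$ necessarily in red and a red edge $x^+w$ would then let $x^+$ dominate its predecessor $x$ --- is correct. But the proof stops there: your third step is explicitly only a plan, and as you yourself note it is ``the main obstacle''. The triangles $x\beats x^+\beats z\beats x$ with $z$ an in-neighbour of $x$ give only the weak constraint that, if the edge $xx^+$ is not red, then every edge from $x^+$ into the red in-star of $x$ has the same colour as $xx^+$; nothing in the proposal turns this, or the fact that $x^+$ dominates everything but $x$ in blue/green, into a rainbow triangle or a vertex dominating all of $D$. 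So there is a genuine gap: the contradiction is never derived.

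The missing idea in the paper's proof is to apply minimality not to $D[R\cup\{x\}]$ but to $D':=D-\bigl(\{x\}\cup\{v: x\btr v\}\bigr)$, which is nonempty since $x^-\in D'$. Minimality gives a vertex $y$ dominating $D'$. If $y\beats x$ then this edge is red, so $y$ reaches $x$ and hence (via the deleted red out-edges of $x$) all of $V(D)\setminus V(D')$ in red, contradicting that no vertex dominates $D$; the same argument shows $y\nodor x$ and hence $y\nodor x^-$. Thus $x$ beats $y$, and by the definition of $D'$ not in red, say $x\btb y$. Since $y\dob x^-$ would give $x\dob x^-$, in fact $y\odoy x^-$, so there is a green $y$--$x^-$ path $P$ in $D'$. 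No vertex $p$ of $P$ satisfies $x\bty p$ (else $x\doy x^-$), so every vertex of $P$ either beats $x$ in red or is beaten by $x$ in blue; since $P$ starts at $y$ (beaten by $x$ in blue) and ends at $x^-$ (beating $x$ in red), some green edge $vw$ of $P$ goes from a vertex with $x\btb v$ to a vertex with $w\btr x$, and $x,v,w$ span a $T_3$ --- the desired contradiction. Your local analysis at $x^+$, while true, does not substitute for this global step.
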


\begin{proof}
  Suppose that there is a vertex $x$ with only red edges coming in. Let $D'$ be obtained from $D$ by deleting $x$ and all vertices that $x$ sends a red edge to. Note that $D'$ is not empty as it contains $x^-$. By the minimality of $D$ there is a vertex $y\in D'$ dominating $D'$. If $y$ beats $x$ then it does so in red and thus it dominates also the vertices in $V(D)\setminus V(D')$, a contradiction to the fact that no vertex dominates $D$. For the same reason, $y\nodor x$, hence  $y\nodor x^-$.
  
  Thus $x$ beats $y$, and by the definition of $D'$ it does so in blue or \yel. Without loss of generality, $x\btb y$. If $y\dob x^-$ then $x\dob x^-$, a contradiction to the choice of $C$. Thus $y\odoy x^-$. Let $P$ be a \yel\ \pth{y}{x^-}\ in $D'$. If $P$ contains a vertex $p$ \st\ $x \bty p$ then $x\doy x^-$, again a contradiction. Since $x$ has only red edges coming in, and it sends no red edges to vertices in $D'$, this means that every vertex on $P\leq D'$ either beats $x$ in red or is beaten by $x$ in blue. As the first vertex $y$ of $P$ is beaten by $x$ in blue, while the last vertex $x^-$ of $P$ beats $x$ in red, $P$ contains an edge from a vertex $v$ that is beaten by $x$ in blue to a vertex $w$ that beats $x$ in red. But then $x,v,w$ span a $T_3$, a contradiction.
  
  Hence there is no vertex all of whose incoming edges have the same colour. Inverting all edges and repeating the argument shows that there is also no vertex all of whose outgoing edges have the same colour.
\end{proof}

By the choice of $C$, a vertex $x$ dominates every other vertex $y\not= x^-$. The following lemma tells us that $C$ not only supplies information about the existence or not of a domination, but also encodes a lot of information about how each domination is implemented.

\begin{lemma} \label{xCy}
  For every $x,y \in V(G)$ with $y\not= x^-$, $x$ dominates $y$ in $D[xCy]$.
\end{lemma}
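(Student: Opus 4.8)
The plan is to bypass induction entirely and argue directly from the minimality of $D$. Assume $x\ne y$ (the case $x=y$ being trivial), and put $V' := V(xCy) = \{x,x^+,x^{+2},\dotsc,y\}$, the vertex set of the directed subpath of $C$ from $x$ to $y$. The one structural observation that makes everything work is that, since $y\ne x^-$, this run of consecutive vertices of $C$ does not wrap all the way around the cycle, so $x^-\notin V'$; consequently $D[xCy]=D[V']$ is a \emph{proper} subtournament of $D$.

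Now invoke the minimality of $D$ as a counterexample to Conjecture~\ref{conj}: the proper subtournament $D[V']$ is not a counterexample, so it contains either a $T_3$ or a vertex that monochromatically dominates all of $D[V']$. Since $D$ contains no $T_3$, neither does $D[V']$; hence $D[V']$ has a vertex $z$ dominating every vertex of $D[V']$ monochromatically. It remains to show $z=x$. Consider the predecessor $z^-$ of $z$ on $C$. By \Lr{genhamilton}, $z$ does not monochromatically dominate $z^-$ in $D$, and hence it does not dominate $z^-$ in the subtournament $D[V']\subseteq D$ either; therefore $z^-\notin V'$. But among the vertices of $V'$ the only one whose $C$-predecessor lies outside $V'$ is $x$ itself, because the predecessor of any $x^{+i}$ with $i\ge 1$ is $x^{+(i-1)}\in V'$. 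Thus $z=x$, and in particular $x$ monochromatically dominates $y$ in $D[xCy]$, as claimed.

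I expect the only real obstacle to be the first step: recognising that the hypothesis $y\ne x^-$ is precisely what guarantees $D[xCy]$ is a proper subtournament, so that minimality can be applied. After that, the identification of the dominating vertex of $D[xCy]$ with $x$ is a short bookkeeping argument with the predecessors along $C$, using only \Lr{genhamilton}, and requires no case analysis.
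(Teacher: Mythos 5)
Your proof is correct and follows essentially the same route as the paper's: apply the minimality of $D$ to the proper subtournament $D[xCy]$ to get a monochromatically dominating vertex, then observe that every vertex of $xCy$ other than $x$ has its $C$-predecessor inside $xCy$ and so cannot be that vertex. Your write-up just makes explicit the points the paper leaves implicit (that $y\ne x^-$ ensures properness, and that domination in a subtournament carries over to $D$).
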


\begin{proof}
  By the minimality of $D$ there is a vertex in $D[xCy]$ dominating all other vertices. Since for every vertex $z\not= x$ in $xCy$ its predecessor $z^-$ is also contained in $xCy$, this vertex can only be $x$.
\end{proof}

Note that this does {not} mean that if $x\dor y$ in $D$, then also $x\dor y$ in $D[xCy]$: if $x$ also dominates $y$ in some other colour except red, then it could be the case that $x$ dominates $y$ in $D[xCy]$ only in that colour. However, if $x\odor y$ in $D$ then also $x\odor y$ in $D[xCy]$.

By Lemma~\ref{genhamilton}, for every vertex $y\in V(D)\setminus\{x,x^+\}$ both dominations $x^+\dom y$ and $y \dom x$ take place. These dominations cannot be in the same colour, as  $x^+$ would then dominate $x$ in that colour. We have proved

\begin{observation}\label{xxplus}
  No vertex can dominate $x$ in a colour in which it is dominated by $x^+$. \noproof
\end{observation}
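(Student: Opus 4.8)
The plan is to read this off directly from Lemma~\ref{genhamilton}, with essentially no machinery. First I would record the single fact about $x^+$ on which the observation hinges: since the predecessor of $x^+$ on $C$ is $x$, Lemma~\ref{genhamilton} guarantees that $x^+$ monochromatically dominates every vertex of $D$ \emph{except} $x$; in particular, there is no colour in which $x^+$ dominates $x$.

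Next I would fix an arbitrary vertex $y$ and an arbitrary colour $c$, assume for contradiction that $y \dom x$ in colour $c$ while also $x^+ \dom y$ in colour $c$, and derive a contradiction. The degenerate cases $y = x$ and $y = x^+$ are disposed of immediately, since $x^+$ dominates $x$ in no colour at all, so we may assume $y \notin \{x, x^+\}$ (in which case, incidentally, both dominations are automatically available by Lemma~\ref{genhamilton}, as $y$ is not the predecessor of $x^+$ and $x$ is not the predecessor of $y$). Concatenating a $c$-coloured directed $x^+$--$y$ path with a $c$-coloured directed $y$--$x$ path yields a monochromatic $c$-coloured directed walk, and hence a monochromatic $c$-coloured directed path, from $x^+$ to $x$. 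Thus $x^+$ dominates $x$ in colour $c$, contradicting the first paragraph. Therefore no vertex dominates $x$ in a colour in which $x^+$ dominates it.

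I do not expect any genuine obstacle here: the heart of the argument is just that two monochromatic dominations of a common colour compose into one. The only points deserving a word of care are the degenerate vertices $y \in \{x, x^+\}$, and the routine remark that a concatenation of two monochromatic directed walks of the same colour contains a monochromatic directed path of that colour.
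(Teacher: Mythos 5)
Your proof is correct and is essentially the paper's own argument: the paper likewise observes that if some $y$ dominated $x$ in the same colour in which $x^+$ dominates $y$, then composing the two monochromatic dominations would make $x^+$ dominate its predecessor $x$, contradicting the property of the Hamilton cycle from Lemma~\ref{genhamilton}. Your extra care about the degenerate cases $y\in\{x,x^+\}$ and about turning a monochromatic walk into a monochromatic path is harmless bookkeeping that the paper leaves implicit.
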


Trivially, $C$ cannot be monochromatic. Therefore it contains consecutive edges with distinct colours. The following lemma tells us how the edges and dominations in $D$ behave at such points.

\begin{lemma}\label{twoCedges}
  Suppose that the edges $x^-x$ and $xx^+$ have distinct colours. Then $x^-\beats x^+$ and $x^+$ dominates $x^-$ only in the third colour.
\end{lemma}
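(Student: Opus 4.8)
Let me think about this. We have consecutive edges $x^-x$ and $xx^+$ on the Hamilton cycle $C$ with distinct colours; say WLOG $x^-\btr x$ and $x\btb x^+$ (red then blue). I want to show $x^-\beats x^+$ and that $x^+$ dominates $x^-$ only in green.

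First, consider the triangle on $\{x^-,x,x^+\}$. We know $x^-\btr x$ and $x\btb x^+$. What about the edge between $x^-$ and $x^+$? By the choice of $C$ (Lemma~\ref{genhamilton}), $x^+$ does not dominate $x$, i.e. $x^+\not\dom x$. If $x^+\beats x^-$, then... hmm, that alone doesn't immediately give a domination of $x$. Let me instead use Observation~\ref{xxplus} and Lemma~\ref{xCy} more carefully, or the $T_3$-freeness.

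Wait — the cleanest route: suppose $x^+\beats x^-$. Then $x^+\btc x^-$ for some colour $c$. Combined with $x^-\btr x$, if $c=$ red we'd get $x^+\btr x^-\btr x$, so $x^+\dor x$, contradicting the choice of $C$. So $c\ne$ red. Now, $x\btb x^+\btc x^-$; if $c=$ blue then $x\dob x^-$, contradicting that $x$ doesn't dominate $x^-$. So $c=$ green. But then the edges of the triangle $x^-x$, $xx^+$, $x^+x^-$ are red, blue, green — a rainbow cycle, a $T_3$! Contradiction. Hence $x^-\beats x^+$, proving the first assertion.

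**The domination of $x^-$ by $x^+$.** By Lemma~\ref{genhamilton}, $x^+$ dominates $x^-$ (since $x^-\ne (x^+)^-= x$... wait, need $x^-\notin\{x^+,(x^+)^+\}$; if the cycle has length $\ge 4$ this holds, and $C$ has length $\ge 3$, but length exactly $3$ would make $x^- = (x^+)^+$ — actually with $|C|=3$ we'd have $x^+ = x^{--}$, and then $x^+\dom x^-$ might fail; but in that case $C$ is a triangle and we just showed it's a $T_3$, contradiction, so $|C|\ge 4$). So $x^+\dom x^-$ genuinely holds. By Observation~\ref{xxplus}, $x^+$ cannot dominate $x^-$ in a colour in which $x^-$ is dominated by $(x^-)^+ = x$. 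Since $x^-\btr x$, $x$ dominates $x^-$ in red (a single red edge), so $x^+\nodor x^-$. It remains to rule out blue. Here I'd use: $x\btb x^+$ and if $x^+\dob x^-$ then $x\dob x^-$, contradicting that $x$ does not dominate its predecessor $x^-$. Hence $x^+$ dominates $x^-$ neither in red nor in blue, so only in green — which exists by Lemma~\ref{genhamilton}.

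**Anticipated obstacle.** The only delicate point is the degenerate case $|V(C)| = 3$, where $x^-$, $x^+$ are themselves consecutive and the statements "$x^+$ dominates $x^-$" need the Hamilton-cycle structure; but the rainbow-triangle argument in the first part already excludes $|V(C)|=3$ (a $3$-cycle with two edges of distinct colours, plus a third edge of any colour, is either a $T_3$ or creates a monochromatic domination of $x$ by $x^+$, both contradictions), so all invocations of Lemma~\ref{genhamilton} and Observation~\ref{xxplus} are legitimate. The rest is a short, clean chain of implications using $T_3$-freeness and the defining property of $C$.
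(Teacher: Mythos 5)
Your overall route is essentially the paper's: first force $x^-\beats x^+$ by excluding each possible colour of a hypothetical edge from $x^+$ to $x^-$ (red or blue would make $x^+$ or $x$ dominate its predecessor, green would create a $T_3$), and then show that the domination $x^+\dom x^-$ guaranteed by Lemma~\ref{genhamilton} can be neither red nor blue. Your first part and your blue case are correct. Your worry about $|V(C)|=3$ is unnecessary: Lemma~\ref{genhamilton} says every vertex dominates all vertices except its predecessor, and the predecessor of $x^+$ is $x$, so $x^+\dom x^-$ holds whatever the length of $C$.

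The exclusion of red, however, is wrongly justified as written. You claim that Observation~\ref{xxplus} forbids $x^+$ from dominating $x^-$ ``in a colour in which $x^-$ is dominated by $(x^-)^+=x$'', and that ``$x$ dominates $x^-$ in red'' because $x^-\btr x$. Both statements are false: the edge $x^-\btr x$ is a red domination of $x$ by $x^-$, not of $x^-$ by $x$, and in fact $x$ does not dominate $x^-$ in any colour, since $x^-$ is its predecessor on $C$ (under your reading the observation would be vacuous). Observation~\ref{xxplus}, applied with base vertex $x^-$ and successor $x$, forbids a vertex from dominating $x^-$ in a colour in which \emph{that vertex} is dominated by $x$; applied to the vertex $x^+$ this rules out blue (because $x\btb x^+$), not red. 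Red is excluded either by the observation applied with base vertex $x$ (no vertex may dominate $x$ in a colour in which it is dominated by $x^+$; since $x^-\dor x$ via the edge $x^-x$, we get $x^+\nodor x^-$), or directly: $x^+\dor x^-$ together with $x^-\btr x$ would give $x^+\dor x$, contradicting Lemma~\ref{genhamilton}. With that one-line repair your argument coincides with the paper's proof.
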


\begin{proof}
  Suppose, without loss of generality, that $x^-\btr x$ and $x\btb x^+$. Applying Observation~\ref{xxplus} twice---once for $x$ and its successor $x^+$ and once for $x^-$ and its successor $x$---we obtain $x^+\odoy x^-$. Thus, if $x^+\beats x^-$, then $x^+\bty x^-$ and $x^-,x,x^+$ would form a $T_3$; hence $x^-\beats x^+$.
\end{proof}

At first sight it might seem that the existence of a Hamilton cycle $C$ as in \Lr{genhamilton} with so strong properties would quickly lead to a contradiction, but apparently this is not the case. Even under very strong assumptions about the distribution of colours on $C$ it is very hard to make any progress; as a piece of evidence about this, we prove here that the edges of $C$ cannot alternate between two colours. We could not prove that they cannot alternate between three colours.

\begin{proposition}\label{alternate}
Pick a vertex $z$ of $D$. It is not the case that all edges $z^{+2k}z^{+2k+1}$ with $k\in\N$ are red and  all edges $z^{+2k-1}z^{+2k}$ are blue.
\end{proposition}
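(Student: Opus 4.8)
The plan is to suppose that $D$ admits a colouring as in the statement and to derive a contradiction. Since the colour of the edge $z^{+i}z^{+i+1}$ then depends only on the parity of $i$, the length $n$ of $C$ is even; write $v_i:=z^{+i}$ for $i\in\Z$ (so $v_0=z$ and $v_{i+n}=v_i$), with $v_{2k}v_{2k+1}$ red and $v_{2k+1}v_{2k+2}$ blue. For every $i$ the edges $v_{i-1}v_i$ and $v_iv_{i+1}$ have distinct colours, so \Lr{twoCedges} applies at every vertex $v_i$ and gives $v_{i-1}\beats v_{i+1}$ and $v_{i+1}\odoy v_{i-1}$, i.e. $v_j\beats v_{j+2}$ and $v_{j+2}\odoy v_j$ for all $j$ (the ``third colour'' always being green). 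If $n=4$ this is already absurd, since it forces both $v_0\beats v_2$ (take $j=0$) and $v_2\beats v_4=v_0$ (take $j=2$). So $n\ge 6$.

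Now split $V(D)=A\cup B$ with $A=\{v_0,v_2,\dots,v_{n-2}\}$ and $B=\{v_1,v_3,\dots,v_{n-1}\}$, so that $C$ runs alternately through $A$ and $B$. Concatenating green paths witnessing $v_{2k+2}\odoy v_{2k}$ for $k=0,\dots,n/2-1$ yields a closed green walk meeting every vertex of $A$, so $A$ lies in a single strong component $K_A$ of the spanning subdigraph of $D$ formed by the green edges; likewise $B\subseteq K_B$. If $K_A=K_B$ then this green digraph is strongly connected, so every vertex dominates every other in green, hence dominates $D$ — impossible. So $K_A\ne K_B$, and, interchanging the roles of $A,B$ (and of the colour names red and blue) if necessary, we may assume there is no green path from $B$ to $A$. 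Then neither $K_A$ nor $K_B$ contains a vertex of the other class, so $K_A=A$ and $K_B=B$; in particular every vertex of $A$ dominates all of $A$ in green inside $D[A]$, and similarly for $B$. Finally there is no green edge between $A$ and $B$: one from $B$ to $A$ is already excluded, and from a green edge $a\beats b$ with $a\in A$, $b\in B$ we would obtain, for every $v\in A$, a green walk $v\to\dots\to a\to b\to\dots\to w$ to every $w\in B$, so (as $v$ also dominates $A$ in green) $v$ would dominate $D$. Consequently every domination that crosses between $A$ and $B$ is red or blue.

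What remains — and this is the heart of the matter — is to turn this rigid structure into a contradiction. The available leverage is: $D[A]$ and $D[B]$ contain no $T_3$, so for every green edge $a_1\beats a_2$ inside $A$ and every $w\in B$ with $a_2\beats w\beats a_1$ the two crossing edges $a_2w$ and $wa_1$ must have the same colour (else $a_1,a_2,w$ span a $T_3$), and symmetrically for green edges inside $B$; moreover \Prr{1in} forbids monochromatic in- or out-neighbourhoods, \Or{xxplus} constrains the colours occurring around consecutive vertices of $C$, and by \Cr{hamilton} $D$ has no dominating vertex. The plan is to feed the edges $v_j\beats v_{j+2}$ and the red/blue pattern of $C$ into these constraints along a well-chosen short stretch of $C$ and conclude either that some $v_j$ dominates its predecessor $v_{j-1}$, or that a cyclic rainbow triangle appears. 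I expect this last step to be the main obstacle: the red/blue ``bipartite'' pattern of the edges between $A$ and $B$ is rather unconstrained, so extracting the contradiction seems to require a careful case analysis on the colours of the edges $v_j\beats v_{j+2}$ and of the crossing edges near a fixed vertex of $C$.
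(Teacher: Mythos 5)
Your first two paragraphs do reproduce the opening of the paper's own proof: the parity classes (there called $V_1,V_2$), the fact that each vertex green-dominates all of its own class, and the absence of green dominations between the classes. (The paper gets this last point in one line and in both directions, with no WLOG and no component analysis: if $v\doy w$ with $v,w$ in different classes, then $v\doy w\doy v^-$ because $v^-$ lies in $w$'s class and $w$ green-dominates it, contradicting that $v$ does not dominate its predecessor.) But after that your text only announces a plan ("feed the edges $v_j\beats v_{j+2}$ and the red/blue pattern into these constraints \dots\ I expect this last step to be the main obstacle"), so the proposition is not actually proved. This is a genuine gap, not a routine verification you omitted: the bipartite structure alone does not yield a contradiction by a short case check around one vertex, and the remaining steps constitute most of the paper's proof.

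For comparison, here is how the paper closes it, assuming \obda\ $x\btr x^+$. (a) Every vertex dominates $x^{-3}$ only in the colour of its outgoing $C$-edge: green is excluded since $x$ and $x^{-3}$ lie in different classes, and $x\dob x^{-3}$ would give $x^-\btb x\dob x^{-3}\btb x^{-2}$, i.e.\ $x^-$ dominates its own predecessor. (b) Applying (a) to $x^{+2}$ and $x^{+3}$ yields $x^{+2}\odor x^-$ and $x^{+3}\odob x$, which forces $x\odoy x^{+2}$ (otherwise $x\dor x^-$ or $x^{+3}\dob x^{+2}$); since \Lr{twoCedges} directs the chord from $x$ to $x^{+2}$, that chord must itself be green, so $x\bty x^{+2}$ for every vertex. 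Note that this is exactly the fact you list only as available ``leverage'' (green edges inside a class): it has to be \emph{proved} before any triangle argument can run, and it is the crux. (c) Since $x$ beats $x^+$ but not $x^-=x^{+(n-1)}$, there is a least $m\ge 1$ such that $x$ does not beat $x^{+(2m+1)}$; then $x,\,x^{+(2m-1)},\,x^{+(2m+1)}$ span a $T_3$: the chord $x^{+(2m-1)}x^{+(2m+1)}$ is green by (b), the edges $xx^{+(2m-1)}$ and $x^{+(2m+1)}x$ join different parity classes and hence are not green, and they cannot share a colour, since then $x^{+(2m+1)}$ would dominate $x^{+(2m-1)}$ in that colour, contradicting $x^{+(2m+1)}\odoy x^{+(2m-1)}$ from \Lr{twoCedges}. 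Steps (a)--(c) are absent from your proposal, so as written it does not establish the proposition.
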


\begin{proof}
  Suppose it is. Then clearly $|V(D)|$ is even. Moreover, by Lemma~\ref{twoCedges}, for every vertex $x$ \tho\ $x \odoy x^{-2}$. Hence for every $i\in\N$, $x \doy x^{-2i}$ holds. Thus $V(D)$ decomposes into two sets $V_1,V_2$, each containing every second vertex on $C$, and each vertex in $V_i$ dominates every other vertex in $V_i$ in \yel.
  
  No domination between $V_1$ and $V_2$ can be \yel, for if $v\doy w$ for $v\in V_1$ and $w\in V_2$ (or vice versa), then $v\doy w \doy v^-$ as $v^-\in V_2$. We claim that every vertex $x$ dominates $x^{-3}$ only in the colour of the edge $xx^+$. Indeed, assume \obda\ that $x\btr x^+$; as $x$ and $x^{-3}$ do not lie in the same $V_i$, we have $x\nodoy x^{-3}$. On the other hand, if $x\dob x^{-3}$, then using the fact that the edges of $C$ alternate between blue and red we obtain
  $$x^-\btb x \dob x^{-3} \btb x^{-2},$$
  a contradiction as $x^-$ cannot dominate its predecessor $x^{-2}$.
  
  Thus, still assuming that $x\btr x^+$, we have $x^{+2}\odor x^-$ and $x^{+3}\odob x$. Hence $x\odoy x^{+2}$ because otherwise $x\dor x^-$ or $x^{+3}\dob x^{+2}$. By Lemma~\ref{twoCedges} we thus have $x\bty x^{+2}$. By the same argument we obtain $y\bty y^{+2}$ for every $y\in V(D)$.
  
  Since $x$ does not beat $x^-$, \ti\ a smallest integer $m$  for which $x$ does not beat $x^{+(2m+1)}$; obviously, $m\ge 1$. We claim that $x, x^{+(2m-1)}, x^{+(2m+1)}$ span a $T_3$. We have just shown that $x^{+(2m-1)}\bty x^{+(2m+1)}$, and by the choice of $m$, we have $x\beats x^{+(2m-1)}$ and $x^{+(2m+1)}\beats x$. None of the edges $xx^{+(2m-1)}$ and $x^{+(2m+1)}x$ is \yel\ since $x^{+(2m-1)}$ and $x^{+(2m+1)}$ do not lie in the same $V_i$ as $x$ does. Moreover, these two edges cannot both be red (respectively blue) as otherwise $x^{+(2m+1)} \dor x \dor x^{+(2m-1)}$ would contradict the fact that $x^{+(2m+1)}\odoy x^{+(2m-1)}$ by Lemma~\ref{twoCedges}. This shows that $x, x^{+(2m-1)}, x^{+(2m+1)}$ span a $T_3$ as claimed, which is a contradiction to the choice of $D$.
\end{proof}

\begin{problem}
Pick a vertex $z$ of $D$. Can it be the case that all edges $z^{3k}z^{3k+1}$ with $k\in\N$ are red,  all edges $z^{3k+1}z^{3k+2}$ are \yel, and all edges $z^{3k-1}z^{3k}$ are blue?
\end{problem}

\section{Proof of \Tr{main}}

In this section we prove
\begin{theorem}\label{3colours}
  In a minimal counterexample to Conjecture~\ref{conj} every vertex has incident edges in all three colours.
\end{theorem}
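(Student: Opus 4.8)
The plan is to prove the contrapositive: with $D$ a minimal counterexample to Conjecture~\ref{conj}, I will show that no vertex of $D$ is incident with edges of only two colours. So suppose a vertex $x$ is incident only with blue and green edges. By Proposition~\ref{1in}, $x$ has incoming edges of both colours and outgoing edges of both colours, and after possibly renaming blue and green we may assume $x\btb x^+$. The observation that drives everything is that no monochromatic red path can pass through $x$; consequently $x$ dominates nothing in red and is dominated by nothing in red, and every domination having $x$ as an endpoint is blue or green. Combined with the Hamilton cycle $C$ of Lemma~\ref{genhamilton}, this means: $x$ dominates $V(D)\setminus\{x^-\}$ in blue or green, every vertex other than $x^+$ dominates $x$ in blue or green, and $x^+$ dominates $V(D)\setminus\{x\}$ while not dominating $x$ at all.

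First I would determine the colours of the edges of $C$ near $x$. If $x^-x$ has a colour different from $xx^+$, that is if $x^-\bty x$, then Lemma~\ref{twoCedges} at $x$ gives $x^-\beats x^+$ and $x^+\odor x^-$. Independently, Lemma~\ref{twoCedges} applied to the consecutive edges $xx^+$ and $x^+x^{+2}$ forbids $x^+x^{+2}$ from being green, for otherwise $x^{+2}$ would dominate $x$ only in red, which is impossible; a symmetric statement constrains $x^{-2}x^-$. This naturally splits the argument into case (A), $x^-\btb x$ (two blue edges at $x$), and case (B), $x^-\bty x$.

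In case (B) I would analyse the red path $P$ witnessing $x^+\odor x^-$: it has length at least two, it avoids $x$, it leaves $x^+$ (to which $x$ sends a blue edge), and it enters $x^-$ (which sends a green edge to $x$). Partitioning the out-neighbourhood $N^+(x)$ and the in-neighbourhood $N^-(x)$ of $x$ by the colour of the edge to, respectively from, $x$, the path $P$ must contain a red edge $uv$ with $u\in N^+(x)$, $v\in N^-(x)$; together with $x$ this spans the directed triangle $x\,u\,v\,x$ whose other two edges are blue and green, and this is a cyclic $T_3$ unless the edges $xu$ and $vx$ have the same colour. Examining such crossings of $P$, and using Lemma~\ref{xCy} to know, for each vertex $y$ on the arc of $C$ under consideration, in which colour $x$ dominates $y$ inside $D[xCy]$, I would aim to force a genuine cyclic rainbow triangle. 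In case (A) the key extra input is that, since $x^{+2}$ dominates $x$ but not its $C$-predecessor $x^+$, the domination $x^{+2}\dom x$ is neither red nor blue (the blue case would give $x^{+2}\dob x\btb x^+$), hence $x^{+2}\doy x$; dually, inverting all edges (which, as in Proposition~\ref{1in}, again yields a minimal counterexample) gives $x\doy x^{-2}$. One then iterates these forced green dominations along the maximal blue runs of $C$ at $x$, against the colour restrictions found above, in the style of Proposition~\ref{alternate}.

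The main obstacle is the bookkeeping required to be sure that the triangle produced in case (B) — and its analogue in case (A) — really is rainbow (no colour repeated) and really is cyclic. This forces one to track precisely in which of the classes $B^+(x),G^+(x),B^-(x),G^-(x)$ each relevant vertex lies, and exactly which red edges between $N^+(x)$ and $N^-(x)$ are admissible without creating a $T_3$ (only those from the blue class of $N^+(x)$ to the blue class of $N^-(x)$, and from the green class to the green class), noting in particular that red edges inside $N^-(x)$ are never an obstruction. In the case actually needed for Theorem~\ref{main}, where \emph{every} vertex misses a colour, there is a cleaner route worth recording: $V(D)$ then partitions into the classes $V_r,V_b,V_g$ of vertices missing red, blue, green; every edge between two distinct classes has a forced colour ($V_rV_b$ green, $V_rV_g$ blue, $V_bV_g$ red), a cyclic rainbow triangle is exactly a directed triangle with one vertex in each class, and the contradiction is extracted from this rigid structure together with $C$ and the fact that no vertex of $D$ dominates its $C$-predecessor (using the two-coloured sub-tournaments $D[V_r],D[V_b],D[V_g]$).
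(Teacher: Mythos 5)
Your preliminary observations are sound (no monochromatic path in the missing colour can pass through $x$, the constraints from Proposition~\ref{1in} and Lemma~\ref{twoCedges} on the $C$-edges at $x$, the forced green dominations $x^{+2}\doy x$ and $x\doy x^{-2}$ in case (A)), but the proof stops exactly where the difficulty begins. In case (B) you correctly observe that the red path $P$ witnessing $x^+\odor x^-$ must contain a red edge $uv$ with $u\in N^+(x)$ and $v\in N^-(x)$, giving a cyclic triangle $x\,u\,v$ that is rainbow \emph{unless} $xu$ and $vx$ have the same colour; but nothing in your sketch excludes the possibility that every such crossing, on every red path, goes from the blue out-class to the blue in-class or from the green class to the green class. ``Examining such crossings \ldots I would aim to force a genuine cyclic rainbow triangle'' is a statement of intent, not an argument, and the same holds for case (A) (``one then iterates \ldots in the style of Proposition~\ref{alternate}'') and for your ``cleaner route'' at the end, where ``the contradiction is extracted from this rigid structure'' is precisely the content of Theorem~\ref{main} and is never extracted. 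You flag the bookkeeping as the main obstacle; that bookkeeping \emph{is} the theorem, and the authors themselves remark (around Proposition~\ref{alternate}) that even much stronger local information about $C$ does not readily yield a contradiction, so there is no reason to expect the local analysis near $x$ that you propose to terminate.

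For comparison, the paper resolves exactly this obstruction with global machinery rather than a triangle extraction near $x$: it partitions the neighbours of $x$ into $R_r^\pm,R_b^\pm,B_r^\pm,B_b^\pm$ according to the colour of the domination between the vertex and $x$ (Lemma~\ref{disjoint} makes this a genuine partition), proves $B_b^\pm\neq\emptyset$ and $R_r^\pm\neq\emptyset$ by applying minimality to the subtournaments $D-B^-(x)$ and $D-(R^-(x)\cup B^+(x))$ (Lemmas~\ref{m} and~\ref{n}), and then shows (Proposition~\ref{r}, Corollary~\ref{geil}) that for any $m\in B_b^+$ and $n\in R_r^-$ the segment $mCn$ contains a strictly shorter segment $pCt$ with $p\in R_r^+$ and $t\in B_b^-$; here the ``same-colour crossing'' issue you run into is handled by using Lemma~\ref{xCy} to localise the relevant green dominating paths inside segments of $C$ and showing where they must first leave $B^+(x)$ or meet $R^+(x)$. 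Iterating Corollary~\ref{geil} with the two colours swapped yields an infinite strictly nested sequence of subpaths of $C$, contradicting finiteness. As it stands, your proposal reproduces the easy preliminary steps (essentially Proposition~\ref{1in}, Lemma~\ref{twoCedges}, and special cases of Lemma~\ref{disjoint}) but leaves the decisive descent argument, or any substitute for it, unproved.
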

This immediately implies our main result \Tr{main}, see our comment preceding \Cr{hamilton}.

For the rest of the paper let $D$ be a minimal counterexample to Conjecture~\ref{conj} and suppose there is a vertex $x$ for which one colour, say \yel, does not appear among the incident edges. We prove that this cannot be the case. Let $C$ be the Hamilton cycle provided by Lemma~\ref{genhamilton}.

Let \rp\ (resp.\ \rm) be the set of vertices that $x$ sends a red edge to (resp.\ receives a red edge from). Define \bp\ and \bm\ similarly for blue. By Proposition~\ref{1in}, all sets \rp, \rm, \bp, and \bm\ are nonempty.

\comment{
\begin{lemma}
There is no red edge from $B^-(x)$ to $R^-(x)$ and no blue edge from $R^-(x)$ to $B^-(x)$.
\end{lemma}

\begin{proof}
Suppose there is a red edge from $v \in B^-(x)$ to $R^-(x)$. Then $v$ sends both a blue and a red path to $x$, i.e.\ it dominates every vertex dominated by $x$, thus it dominates all vertices.
\end{proof}
}

\ifnum \COLORON = 1 \newcommand{\figc}{figC}
\newcommand{\ourcap}{The sets $R_i^\pm$ and $B_i^\pm$.}
\else \newcommand{\figc}{figBW}
\newcommand{\ourcap}{The sets $R_i^\pm$ and $B_i^\pm$. The dotted lines represent red edges, the dashed lines (resp.\ curves) represent blue edges (resp.\ paths).}
\fi

Define $R^-_i(x)= \{v \in \rm \mid x \dom_i v\}$ and $R^+_i(x)= \{v \in \rp \mid v \dom_i x\}$ for $i\in \{r,b\}$; 
Similarly for the sets  $B^-(x)$ and $B^+(x)$. For example, $\bbm= \{v \in \bm \mid x \dob v\}$; see \fig{\figc}. Note that the assertion $\rrm\not=\emptyset$ is equivalent to the existence of a red directed cycle through $x$, which is in turn equivalent to $\rrp\not=\emptyset$. A similar assertion holds for \bbm, \bbp, and blue cycles through $x$.

\showFig{\figc}{\ourcap}

The following lemma tells us that the sets \rrm, \rbm, \rrp, \rbp, \brm, \bbm, \brp, and \bbp\ are pairwise disjoint.

\begin{lemma}\label{disjoint}
  No vertex dominates $x$ both in red and blue. No vertex is dominated by $x$ both in red and blue.
\end{lemma}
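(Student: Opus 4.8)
The plan is to treat the two assertions of the lemma separately, in each case assuming the stated domination and deriving a contradiction to the minimality of $D$ by means of \Lr{genhamilton}. The only feature of $x$ that enters is that no edge at $x$ is \yel; its single useful consequence is that no monochromatic \yel\ directed path can start or end at $x$, so $x$ dominates no vertex in \yel\ and no vertex dominates $x$ in \yel.

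For the first assertion I would suppose that some vertex $v\neq x$ satisfies $v\dor x$ and $v\dob x$, and then show that $v$ dominates every vertex of $D$ — contradicting that $D$ is a counterexample to \Cnr{conj}. By \Lr{genhamilton} the only vertex $v$ might fail to dominate is its predecessor $v^-$ on $C$, so it suffices to produce a monochromatic path from $v$ to $v^-$. The hypothesis forces $v\neq x^+$ (otherwise $v$ would dominate its own predecessor $x$, against \Lr{genhamilton}), hence $v^-\neq x$; also $v^-\neq x^-$ because $v\neq x$. So \Lr{genhamilton} gives that $x$ dominates $v^-$, and by the observation above it does so in red or in blue. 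Concatenating that path with whichever of $v\dor x$, $v\dob x$ has the same colour yields the desired monochromatic $v$--$v^-$ path.

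The second assertion is symmetric; I would either rerun the argument with the direction of domination reversed, or simply note that it is the first assertion applied to the tournament obtained from $D$ by reversing all edges — which, by \Lr{reverse} together with the fact recorded just before it that no vertex of $D$ is dominated by every other vertex, is again a minimal counterexample to \Cnr{conj} in which $x$ meets no \yel\ edge. Directly: assume $x\dor v$ and $x\dob v$ with $v\neq x$; since $x$ dominates $v$ we get $v\neq x^-$ (else $x$ would dominate its predecessor, against \Lr{genhamilton}), so $v^+\neq x$, and by \Lr{genhamilton} it is enough to show that $v^+$ dominates $v$, for then $v$ would be dominated by every vertex of $D$, contradicting that no vertex of $D$ is dominated by its successor on $C$. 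As $v\neq x$, \Lr{genhamilton} tells us $v^+$ dominates $x$, by the observation in red or in blue, and concatenating with the matching one of $x\dor v$, $x\dob v$ shows $v^+$ dominates $v$.

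I do not anticipate a genuine obstacle: each half is a concatenation of two monochromatic paths, and the single point that makes it go through — as opposed to an analogous three-colour situation, where it would fail — is that the path incident to $x$ is never \yel, so only two colours are in play and the doubly-coloured hypothesis always supplies a colour match. The only thing to watch is the boundary position ($v=x^+$ in the first part, $v=x^-$ in the second), where \Lr{genhamilton} already rules out the assumed domination involving $x$. Given the lemma, one then reads off that \rrm, \rbm, \rrp, \rbp, \brm, \bbm, \brp, and \bbp\ are pairwise disjoint: two of these with opposite superscript are disjoint because $x$ and any vertex span exactly one edge, two with different letter are disjoint because that edge has only one colour, and the remaining four pairs $\{\rrm,\rbm\}$, $\{\brm,\bbm\}$, $\{\rrp,\rbp\}$, $\{\brp,\bbp\}$ are empty precisely by the two assertions of the lemma.
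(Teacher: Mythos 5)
Your proof is correct and is essentially the paper's own argument: both rest on \Lr{genhamilton} giving a domination whose path is incident with $x$ (hence red or blue, as $x$ meets no \yel\ edge), which concatenated with the assumed second-colour domination would make a vertex dominate its predecessor on $C$ — the paper just phrases it contrapositively ($x\dor y^-$ forces $y\nodor x$) rather than assuming both colours and deriving the contradiction. The boundary cases you single out ($v=x^+$, $v=x^-$) are exactly the vertices the paper excludes at the outset, so there is no gap.
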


\begin{proof}
  Let $y\in V(D)\sm\{x,x^+\}$. Then $x$ dominates $y^-$ in red or in blue, say $x\dor y^-$. This means that $y\nodor x$ and hence $y\odob x$. Analogously, every vertex $z\in V(D)\sm\{x^-,x\}$ is dominated by $x$ only in red or only in blue.
\end{proof}

Recall that $x^-\beats x$. From now on we assume, without loss of generality, that 
\labtequ{annahm}{$x^- \in \rm$.}


\begin{lemma} \label{m}
\bbp\ and \bbm\ are nonempty.
\end{lemma}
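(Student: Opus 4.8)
The plan is to assume for contradiction that $\bbm=\emptyset$; by the equivalence recorded just above, this says that $D$ has no blue directed cycle through $x$, and once a contradiction is reached we will have $\bbm\not=\emptyset$ and hence also $\bbp\not=\emptyset$ by the same equivalence. The first step is to observe that this assumption forces $x$ to dominate every vertex of $\bm$ in red: for $v\in\bm$ we have $v\not=x^-$ (since $x^-\in\rm$ and $\rm\cap\bm=\emptyset$), so $x$ dominates $v$ by \Lr{genhamilton}; a monochromatic path realizing this starts with an edge at $x$, which is red or blue because $x$ meets no \yel\ edge, so the path is entirely red or entirely blue; it cannot be blue, as together with the blue edge $vx$ it would close a blue cycle through $x$; hence $x\dor v$. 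Set $\mathcal D_r:=\{u\in V(D)\setminus\{x\}\mid x\dor u\}$ and $\mathcal D_b:=\{u\in V(D)\setminus\{x\}\mid x\dob u\}$. Then $\bm\subseteq\mathcal D_r$ and, trivially, $\rp\subseteq\mathcal D_r$, so $\mathcal D_r\not=\emptyset$ by \Prr{1in}. By \Lr{disjoint} the sets $\mathcal D_r$ and $\mathcal D_b$ are disjoint, and since $x$ dominates every vertex but $x^-$ — monochromatically, hence in red or blue by the same argument about $x$ missing \yel\ — we obtain the partition $V(D)=\{x\}\sqcup\{x^-\}\sqcup\mathcal D_r\sqcup\mathcal D_b$.

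Next I would apply the minimality of $D$ to the subtournament induced by $W:=\{x,x^-\}\cup\mathcal D_b$. Because $V(D)\setminus W=\mathcal D_r\not=\emptyset$, $D[W]$ is a proper non-empty subtournament of $D$, so it has a vertex $z$ dominating every other vertex of $D[W]$. Since $x$ does not dominate its predecessor $x^-\in W$ (and so, a fortiori, not inside $D[W]$ either), $z\not=x$, and hence $z$ dominates $x$ in $D[W]$. The key point is that every in-neighbour of $x$ in $D[W]$ lies in $\rm$: we have $W\setminus\{x\}=\{x^-\}\cup\mathcal D_b$ and $x^-\in\rm$, while if some $v\in\mathcal D_b$ beats $x$ then the edge $vx$ is not \yel\ and not blue (a blue $vx$ together with the blue path $x\dob v$ would be a blue cycle through $x$), hence red, i.e.\ $v\in\rm$. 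Therefore every monochromatic path ending at $x$ within $D[W]$ is red, so the domination $z\dom x$ in $D[W]$ takes place in red: $z\dor x$, and this of course also holds in $D$.

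Finally I would split into the two possibilities for the location of $z$ in $W=\{x,x^-\}\cup\mathcal D_b$. If $z\in\mathcal D_b$, then $x\dob z$; moreover, combining $z\dor x$ with $x\dor u$ (valid for every $u\in\mathcal D_r$) yields $z\dor u$ for all $u\in\mathcal D_r$, and since $z$ dominates all of $W\setminus\{z\}$ we conclude that $z$ dominates every vertex of $W\cup\mathcal D_r=V(D)$ other than itself — contradicting the fact that no vertex of $D$ dominates every other vertex. If instead $z=x^-$, then $x^-$ dominates $W\setminus\{x^-\}=\{x\}\cup\mathcal D_b$; since $x^-$ does not dominate its predecessor $x^{-2}$ — which is distinct from $x$ and $x^-$, the directed Hamilton cycle $C$ having length at least $3$ — and since $x^{-2}$ lies in one of the four parts of the partition $V(D)=\{x\}\sqcup\{x^-\}\sqcup\mathcal D_r\sqcup\mathcal D_b$, we must have $x^{-2}\in\mathcal D_r$, that is $x\dor x^{-2}$. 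But then the red edge $x^-x$ (recall $x^-\in\rm$) followed by a red \pth{x}{x^{-2}} gives $x^-\dor x^{-2}$, again contradicting that $x^-$ does not dominate its predecessor. In either case we obtain a contradiction, so $\bbm\not=\emptyset$, and therefore $\bbp\not=\emptyset$ as well.

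The step I expect to be the real obstacle — or, rather, the idea on which the whole argument rests — is recognizing that one should not try to extract a $T_3$ here: every directed triangle through $x$ uses at most two colours because $x$ misses \yel, and there is no apparent way to find a $T_3$ away from $x$. Hence the contradiction must instead come either from a vertex dominating all of $D$ or from a violation of the ``dominates everything except its predecessor'' property of \Lr{genhamilton}, and the real work is to choose the subtournament $W=\{x,x^-\}\cup\mathcal D_b$ cleverly enough — small enough that $x$ cannot be its dominating vertex, yet arranged so that the edges entering $x$ inside $W$ are all red — that the dominating vertex handed to us by minimality is forced into exactly one of those two contradictions.
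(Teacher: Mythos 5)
Your proof is correct and follows essentially the same route as the paper's: apply the minimality of $D$ to a subtournament in which every edge entering $x$ is red (you keep $\{x,x^-\}\cup\mathcal D_b$, the paper simply deletes \bm), note that the dominating vertex $z$ it provides cannot be $x$ because $x^-$ survives, deduce $z\dor x$, and chain through $x$ to make $z$ dominate all of $D$. The only differences are cosmetic: your choice of subtournament needs the extra remark that a blue edge from $\mathcal D_b$ into $x$ would close a blue cycle through $x$, and your separate case $z=x^-$ could be absorbed into the same ``$z$ dominates every vertex'' contradiction instead of the detour via $x^{-2}$.
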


\begin{proof}
Suppose not; then, by our comment before \Lr{disjoint} both sets are empty. As \bbm\ is empty, $x \odor y$ holds for every vertex $y \in \bm$. Now consider the tournament $F:=D-\bm$. As \bm\ is nonempty, the minimality of $D$ implies that there is a vertex $z$ in $F$ that dominates $F$.  Note that $z\not=x$, since $x^-\in V(F)$ by \eqref{annahm}. Thus $z\dor x$, since all incoming edges of $x$ in $F$ are red. But then $z\dor x\dor y$ holds (in $D$) for every vertex $y \in \bm$, which means that $z$ dominates every vertex in $V(F) \cup \bm= V(D)$. This contradicts the fact that no vertex dominates all vertices of $D$. Hence, \bbp\ and \bbm\ are indeed nonempty.
\end{proof}

It might seem at first sight that \Lr{m} implies, by symmetry, that $\rrp,\rrm$ are also non-empty. This argument is however faulty, since we are assuming \eqref{annahm}. Still, using \Lr{m}, we can prove

\begin{lemma} \label{n}
\rrp\ and \rrm\ are nonempty. Moreover, there is a vertex in \rrm\ whose successor on $C$ does not lie in \bp.
\end{lemma}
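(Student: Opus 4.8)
The plan is to establish the two assertions in turn, in both cases combining the minimality of $D$ with the blue directed cycle through $x$ furnished by \Lr{m}. Recall that $\rrm\neq\emptyset$, $\rrp\neq\emptyset$ and the existence of a red directed cycle through $x$ are all equivalent, so the first assertion amounts to exhibiting a red directed cycle through $x$.

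To this end, consider the proper non-empty subtournament $F:=D-B^+(x)$ (it is proper, and the relevant sets are non-empty, by \Prr{1in}). By minimality some vertex $z$ dominates $F$. Since $x^-\in R^-(x)\subseteq V(F)$ and $x$ does not dominate $x^-$, we have $z\neq x$; and $z\neq x^+$, since $z$ dominates $x\in V(F)$ while $x^+$ does not. If $z$ dominated $x$ in blue then, as $x$ sends a blue edge to every vertex of $B^+(x)$, $z$ would dominate all of $B^+(x)$ and hence all of $D$, which is impossible; and $z$ cannot dominate $x$ in green, since $x$ is incident with no green edge. Hence $z\dor x$. Now note: if $x\dor z$ then $x\dor z\dor x$ is a red closed walk through $x$; if $z\in R^+(x)$ then $x\btr z\dor x$ is one; and if some $v\in R^+(x)$ satisfies $v\dor z$ then $v\dor z\dor x$ shows $v\in\rrp$. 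In each case we obtain a red cycle through $x$, so we may assume none of these occurs. In particular $x$ dominates $z$ in blue (it does dominate $z$, as $z\neq x^+$; not in red, not in green). In this residual configuration one has to bring in the blue cycle of \Lr{m} --- together with \Lr{twoCedges}, \Or{xxplus}, and a search for a cyclic rainbow triangle on $x$, $x^+$ and a vertex of $B^-(x)$ in the borderline case $R^+(x)=\{x^+\}$ --- to reach a contradiction, either by producing a vertex that dominates all of $D$ or by exhibiting a $T_3$.

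For the ``Moreover'' part I would argue by contradiction, assuming that $v^+\in B^+(x)$ for every $v\in\rrm$. Passing again to $F=D-B^+(x)$ produces, as above, a vertex $z\neq x$ together with a red path $P$ from $z$ to $x$ in $F$ ending at some $p\in R^-(x)$, with $p\notin B^+(x)$. If $x$ red-dominates some vertex of $P$, then $x$ lies on a red cycle whose last vertex before $x$ is $p$, so that $p\in\rrm$; one then argues, using \Lr{xCy} (and that $x$ dominates $p\in\rrm$ only in red) together with \Lr{twoCedges}, that $p^+$ cannot lie in $B^+(x)$, contradicting our assumption. If instead $x$ dominates every vertex of $P$ other than $x^-$ in blue, we are back in the residual situation handled via \Lr{m} in the first part.

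The main obstacle throughout is the asymmetry forced by the normalisation $x^-\in R^-(x)$ of \eqref{annahm}: the naive colour-swapped copy of the proof of \Lr{m} fails precisely because $x^-$ is the unique in-neighbour of $x$ that $x$ does not dominate, so a dominating vertex of $D-R^-(x)$ may coincide with $x$ and nothing propagates. Bridging this gap --- extracting the red cycle from the red path $P$, disposing of the case $R^+(x)=\{x^+\}$, and at the same time locating a $v\in\rrm$ whose $C$-successor avoids $B^+(x)$ --- is where \Lr{m}, \Lr{twoCedges}, \Or{xxplus} and the $T_3$-hunt have to be woven together, and I expect this to be by far the most delicate step.
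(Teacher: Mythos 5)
There is a genuine gap. With your decomposition $F := D - B^+(x)$ the argument is not carried through: after establishing $z \dor x$ you split into cases, and the ``residual configuration'' (where $x$ dominates $z$ only in blue, $z \notin R^+(x)$, and no vertex of $R^+(x)$ red-dominates $z$) is never resolved --- you only assert that \Lr{m}, \Lr{twoCedges}, \Or{xxplus} and a $T_3$-hunt ``have to be woven together'', which is exactly the part that would need to be proved. The same happens in the ``moreover'' part: the key claim that $p^+ \notin B^+(x)$ for your vertex $p \in \rrm$ is asserted rather than proved, and the other branch falls back on the unproved residual case. There is also a slip inside that case: you justify ``$x$ dominates $z$'' by $z \neq x^+$, but what is needed is $z \neq x^-$, and at that point $z = x^-$ has not been excluded (note $x^- \in V(F)$, and $x^-$ dominates every vertex of $D$ except $x^{-2}$, so it may well be the dominating vertex of $F$).

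The paper's proof avoids all of this by deleting more: it works in $G := D - (R^-(x) \cup B^+(x))$. In $G$ every edge leaving $x$ is red and every edge entering $x$ is blue, so inside $G$ the vertex $x$ can dominate only in red and can be dominated only in blue. \Lr{m} provides $y \in \bbm \subseteq B^-(x) \subseteq V(G)$ with $x \odob y$, so $x$ does not dominate $y$ in $G$; hence the dominating vertex $z$ of $G$ satisfies $z \neq x$. Then $z \dob x$, so $x \nodob z^-$ by \Or{xxplus}, and therefore $x \dor z^-$; since $z$ dominates all of $V(G)$ and, via $z \dob x$, all of $B^+(x)$, but does not dominate $z^-$, we get $z^- \in R^-(x)$, i.e.\ $z^- \in \rrm$. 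The ``moreover'' statement is then immediate from the same construction: $(z^-)^+ = z \in V(G)$, so it does not lie in \bp. Your closing observation that removing only $R^-(x)$ fails (the dominating vertex could be $x$) is correct; the missing idea is to remove $B^+(x)$ \emph{as well as} $R^-(x)$, rather than instead of it, which restores the colour control at $x$ and makes the residual cases you could not close disappear.
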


\begin{proof}
By the minimality of $D$, the subtournament $G:=D-(\rm\cup\bp)$ contains a vertex $z$ dominating it. In this subtournament $x$ can only dominate vertices in red. By Lemma~\ref{m}, $G$ contains a vertex $y$ (in \bm) with $x\dob y$ in $D$. By Lemma~\ref{disjoint} we have $x\odob y$ and hence $x$ does not dominate $y$ in $G$. This means that $z\not=x$, thus $x \dom z^-$. Since all incoming edges at $x$ in $G$ are blue, we obtain $z \dob x$ and hence $x \nodob z^-$ by Observation~\ref{xxplus}. Thus $x\dor z^-$. As $z^-$ is not dominated by $z$, it cannot lie in $\rp \cup \sgl{x} \cup \bm$. Therefore $z^-\in\rm$ and hence $z^-\in \rrm$. Thus \rrm, and hence also \rrp, is nonempty.
\end{proof}

Our last two lemmas prove the existence of vertices $m \in \bbp$ and $n \in \rrm$. We will now make use of this fact to gain some information about $C$.

\begin{proposition} \label{r}
For every pair of vertices $m \in \bbp$ and $n \in \rrm$ with $m\not= n^+$, the path $mCn$ contains a vertex $p \in \rrp$ such that $mCp$ does not meet \bm.
\end{proposition}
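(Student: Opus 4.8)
The strategy is to walk along the path $mCn$ from $m$ towards $n$ and track, vertex by vertex, how $x$ interacts with the vertices we meet. The key constraint will be that no three vertices may span a $T_3$, together with the colour restriction on $x$ (no green at $x$) and the structural facts from Lemmas~\ref{disjoint}, \ref{m}, \ref{n} and Observation~\ref{xxplus}. I would look at the edge of $D$ between $x$ and each vertex $v$ of $mCn$: since $x$ has no green incident edge, this edge is red or blue, and moreover by the definitions of the sets $R^\pm_i, B^\pm_i$ the ``type'' of $v$ relative to $x$ is one of finitely many possibilities. The endpoint $m$ lies in \bbp, so $x\btb m$ and $m\odob x$; the endpoint $n$ lies in \rrm, so $n\btr x$ and $x\odor n$. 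So as we traverse $mCn$ we start at a vertex beaten by $x$ in blue and end at a vertex beating $x$ in red, and we want to locate the first ``transition'' that forces a vertex $p\in\rrp$, i.e.\ a vertex with $x\btr p$ and $p\odor x$ (equivalently $p\btr x$ and also $x$ reaches $p$ in red along $C$ — more precisely a vertex lying on a red directed cycle through $x$), occurring before we have met any vertex of \bm.

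**Key steps.** First I would set $p$ to be the first vertex on $mCp'$-type prefix that lies in $\rp$, arguing it exists and comes before $mCn$ meets \bm: walking from $m$, as long as we stay out of \bm, the edge between $x$ and the current vertex $v$ is either a red edge out of $x$ (so $v\in\rp$) or a red/blue edge into $x$ with $v\notin\bm$, i.e.\ $v\in\rm$. Using Lemma~\ref{disjoint}, $\rm$ splits into \rrm\ and \rbm; using the $T_3$-freeness applied to the triple $x, v^-, v$ along consecutive vertices of $C$ (in the spirit of Lemma~\ref{twoCedges}), plus Observation~\ref{xxplus} ($x^+$ does not dominate $x$ in the colour of $x^-\to x$), I would show the type can only change in a controlled way, so that before reaching a vertex of \bm\ we must pass through a vertex $p\in\rp$. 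Second, once $p\in\rp$ with $mCp$ avoiding \bm\ is in hand, I would upgrade $p\in\rp$ to $p\in\rrp$: by Lemma~\ref{xCy}, $x$ dominates $p^-$ in $D[xCp^-]$, and by Lemma~\ref{n}/the comment before Lemma~\ref{disjoint} we know $\rrp$ is nonempty and detect a red cycle through $x$; then I would chase the colour along the edge $xp$ and the domination $x\dom p^-$ to force $x\odor p$ and a red directed cycle through $p$ and $x$, which is exactly $p\in\rrp$. The green-freeness of $x$ is what rules out the alternative colours at each branch, and Lemma~\ref{disjoint} is what makes ``dominates $x$ in red'' and ``dominates $x$ in blue'' mutually exclusive so the bookkeeping closes.

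**Main obstacle.** The delicate point will be the very first step: showing that, walking from $m\in\bbp$, we necessarily hit a vertex of $\rp$ before we hit a vertex of $\bm$, and that this happens inside $mCn$ (not wrapping around past $n$). The danger is a long stretch of vertices all lying in $\rm\setminus\bm = \rrm\cup\rbm$ with no $\rp$-vertex in between and no $\bm$-vertex either; I would need a $T_3$/Observation~\ref{xxplus} argument, analogous to the alternation argument in Proposition~\ref{alternate}, to show such a stretch cannot persist all the way to $n$ — essentially, the colours of the $C$-edges inside the stretch together with the colours of the $x$-$v$ edges would be overconstrained, forcing either a $T_3$ or a vertex dominating all of $D$. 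Getting the endpoint cases ($m = n^+$ is excluded by hypothesis, but ``$p$ could be $n$ itself'' or ``$p=m$'') handled cleanly, using $n\in\rrm$ (so $n\notin\rp$, hence $p\ne n$) and $m\in\bbp$ (so $m\notin\rm$, and whether $m\in\rp$ must be checked), is the other piece of careful casework I expect to spend effort on.
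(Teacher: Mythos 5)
Your skeleton coincides with the paper's (locate the first vertex of $mCn$ lying in \rp, show it is reached before $mCn$ meets \bm, then upgrade it to \rrp), but both crucial steps are only announced, and the mechanism you propose for the first one would not close. A local type-transition analysis along consecutive vertices of $C$ (triples $x,v^-,v$, in the spirit of Lemma~\ref{twoCedges} or Proposition~\ref{alternate}) cannot rule out the bad event: a $C$-edge may pass directly from a vertex of \rm\ into a vertex of \bm\ without creating a $T_3$ or any immediate domination contradiction, since both endpoints beat $x$ and the triangle with $x$ is not cyclic, so the types are \emph{not} overconstrained locally. What the paper actually uses is global: since $m\in\bbp$ and $n\in\rrm$, Lemma~\ref{disjoint} forces $m\odoy n$, and Lemma~\ref{xCy} confines a green \pth{m}{n} to $D[mCn]$; a green edge from \bp\ to \rm\ would give a $T_3$ with $x$, and no green edge meets $x$, so this path must meet $\rp\cup\bm$. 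If the first \bm-vertex $u$ came before any \rp-vertex, one checks $u^-\in\rrm$, deduces (again via Lemma~\ref{disjoint}) that $m\odoy u^-$, and the green path confined to $D[mCu^-]$ would have to leave \bp\ along a green edge into \rm, a $T_3$. Nothing playing the role of these ``dominates only in green'' facts, nor of Lemma~\ref{xCy} as a confinement tool, appears in your outline, and this is the heart of the proof. (Your bookkeeping of types also omits the possibilities $v\in\bp$ and $v=x$ on the prefix, though that is a minor slip.)

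The second step has the same kind of gap: knowing $p\in\rp$ with $mCp$ disjoint from \bm\ does not by itself give $p\dor x$, which is what $p\in\rrp$ requires (not $x\odor p$, which is automatic). The dangerous case is $p^-\in\rrm$: then $x$ may dominate $p^-$ only in red, and $p\dob x$ yields no contradiction with $p\nodom p^-$, so your ``chase the colour of $xp$ and of $x\dom p^-$'' does not decide the colour in which $p$ dominates $x$. The paper excludes $p^-\in\rrm$ by yet another instance of the same global argument ($m\odoy p^-$, Lemma~\ref{xCy}, and the $T_3$ from a green edge from \bp\ to \rm), and excludes $p^-\in\{x,x^-\}$ using $m\odob x$ in $D[mCx]$ (forcing $mCx$ to meet \bm, hence $x\notin mCp$) together with $p\ne x$; only then does $x\dob p^-$ follow, whence $p\nodob x$ and therefore $p\dor x$. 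As written, your proposal is a plausible plan whose two essential steps are deferred to arguments that would not work in the local form you suggest.
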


\begin{proof}
By Lemma~\ref{disjoint} we have $m\nodor n$ and $m\nodob n$, since otherwise $m\in \brp$ or $n\in \rbm$ respectively. Thus $m \odoy n$, and by \Lr{xCy} we have $m\odoy n$ also in $D[mCn]$. Let $P$ be a \yel\ directed path in $D[mCn]$ from $m$ to $n$. If there was a \yel\ edge from some vertex $y\in\bp$ to some vertex $z\in\rm$, then $x,y,z$ would span a $T_3$; thus $P$, and hence also $mCn$, has to visit \rp\ or \bm. Suppose that $mCn$ visits \bm\ before \rp\ and let $u$ be its first vertex in \bm. Note that all vertices of $mCu^-$ lie in $\bp\cup\rm\cup\{x\}$.

Clearly, $x\nodob u^-$ as otherwise $u\dob x\dob u^-$. This means that $u^-\notin\bp\cup \rbm\cup\{x\}$, hence $u^- \in \rrm$. Moreover, $m\nodob u^-$  as $m\dob u^-$ would imply $x\dob m\dob u^-$. As $m\dor u^-$ would imply $m\dor u^-\dor x$, contradicting the fact that $m\in \bbm$, we obtain $m\odoy u^-$. By \Lr{xCy} we have $m\odoy u^-$ also in $mCu^-$. But any \yel\ path in $mCu^-$ from $m$ to $u^-$ has to leave \bp\ for the first time at some point. As there is no \yel\ edge entering $x$, it has to do so along an edge to \rm. However, the endvertices of such an edge together with $x$ would form a $T_3$. This contradicts our assumption that $mCn$ visits \bm\ before \rp.

Thus $mCn$ visits \rp\ at some point without having visited  \bm\ before. Let $r$ be the first vertex of $mCn$ that lies in \rp. We will show that it lies in \rrp. Since $m \odob x$, \Lr{xCy} yields that $m\odob x$ also in $D[mCx]$, hence $mCx$ has to meet \bm. As $mCr$ does not meet \bm, we have $x\notin mCr$, and in particular $r^-\not= x$. This means that $r^-\in\bp\cup\rm$. Suppose that $r^-\in \rrm$. Now $m\dor r^-$ would imply $m\dor r^-\dor x$, and $m\dob r^-$ would imply $x\dob m\dob r^-$, contradicting the fact that $m\in \bbp$ and $r^-\in \rrm$. Hence $m\odoy r^-$ in this case, so \Lr{xCy} implies that $m\odoy r^-$ also in $D[mCr^-]$. But $mCr^-$ only meets vertices in \bp\ and \rm, thus there has to be a \yel\ edge from \bp\ to \rm, which again yields a $T_3$. This contradiction shows that $r^-\notin \rrm$.

As $r\in\rp$, we have $r\not= x$ and thus $r^-\not= x^-$. Hence $r^-\in\bp\cup \rbm$, in particular $x\dob r^-$. Now $r\dob x$ would contradict $r\nodom r^-$, whence $r\in \rrp$, and we can choose $p=r$.
\end{proof}

\begin{corollary} \label{geil}
For every pair of vertices $m \in \bbp$ and $n \in \rrm$ with $m\not= n^+$, the path $mCn$ contains a subpath $pCt$ with $p \in \rrp$ and $t \in \bbm$.
 \end{corollary}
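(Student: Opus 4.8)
The plan is to produce the promised subpath $pCt$ by applying Proposition~\ref{r} twice: once to $D$ itself, which already supplies a vertex $p\in\rrp$, and once to the tournament $\widetilde D$ obtained from $D$ by reversing the orientation of every edge and, simultaneously, interchanging the colours red and blue, which will supply a vertex $t\in\bbm$. First I would check that $\widetilde D$ may take the place of $D$ in the whole setup of this section. Reversing all edges carries cyclic rainbow triangles to cyclic rainbow triangles and carries ``$v$ dominates every vertex'' to ``$v$ is dominated by every vertex'', so by Lemma~\ref{reverse} (asserting that $D$ is minimal with the property of containing no vertex dominated by all others) the tournament $\widetilde D$ is again a minimal counterexample to Conjecture~\ref{conj}; interchanging red and blue is a symmetry of all these notions, and $x$ is still incident with no edge of the third colour. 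One also checks that Proposition~\ref{r}, together with the results it uses (Lemmas~\ref{genhamilton}, \ref{disjoint} and \ref{xCy} and the ``$T_3$''-arguments), never invokes the normalisation~\eqref{annahm}, so Proposition~\ref{r} is available for $\widetilde D$. Finally, by uniqueness in Lemma~\ref{genhamilton}, the Hamilton cycle of $\widetilde D$ is $C$ with reversed orientation, say $\widetilde C$; hence a path $a\widetilde Cb$ spans the same vertices as $bCa$, and the $\widetilde C$-successor of any vertex is its $C$-predecessor.

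Next I would translate Proposition~\ref{r} for $\widetilde D$ back into the language of $D$. Unravelling the definitions — using only that reversing an edge interchanges its head and tail, and that reversing a monochromatic directed path interchanges the two endpoints between which it witnesses a domination — one finds that under the reverse-and-swap operation $\rrm$ becomes the set ``$B_b^+$'' of $\widetilde D$, $\bbp$ becomes ``$R_r^-$'', $\bbm$ becomes ``$R_r^+$'', and $\rp$ becomes ``$B^-$''. Applying Proposition~\ref{r} for $\widetilde D$ with $n\in\rrm$ and $m\in\bbp$ in the roles of the ``$B_b^+$''- and ``$R_r^-$''-vertices — the side condition is $m\ne n^+$, since the $\widetilde C$-successor of $m$ is $m^-$ — and translating the conclusion back, we get: $mCn$ contains a vertex $t\in\bbm$ such that $tCn$ does not meet $\rp$.

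It remains to combine the two. Proposition~\ref{r} applied directly to $D$ gives $p\in\rrp$ on $mCn$ with $mCp\cap\bm=\emptyset$. Since $t\in\bbm\subseteq\bm$ while $mCp$ avoids $\bm$, the vertex $t$ cannot lie on $mCp$; as both $p$ and $t$ lie on $mCn$, this means $t$ lies strictly after $p$, so $pCt$ is a subpath of $mCn$ with $p\in\rrp$ and $t\in\bbm$, as required. I expect the only genuinely delicate point to be the bookkeeping in the translation step: verifying that the decorated sets of $\widetilde D$ match the correct decorated sets of $D$ and that the roles of $m$ and $n$ and the side condition line up; everything else is routine.
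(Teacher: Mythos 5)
Your proposal is correct and follows essentially the same route as the paper: apply Proposition~\ref{r} once to $D$, then once to the edge-reversed, red/blue-swapped tournament (using Lemma~\ref{reverse} to keep minimality), and conclude that $t$ comes after $p$ on $mCn$ because $mCp$ avoids $\bm$. Your extra bookkeeping — checking that Proposition~\ref{r} does not depend on~\eqref{annahm} and translating the decorated sets explicitly — is a careful verification of steps the paper leaves implicit, not a different argument.
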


\begin{proof}
We can apply \Prr{r} to obtain the vertex $p$. By changing the direction of every edge (note that by \Lr{reverse} this operation preserves the fact that the tournament is a minimal counterexample), switching the colours blue and red and applying \Prr{r} again with the roles of $m$ and $n$ interchanged, we find the vertex $t \in \bbm$. As $mCp$ does not meet \bm, the path $mCn$ meets $p$ before $t$, hence $pCt$ is contained in $mCn$.
\end{proof}



Applying \Cr{geil} repeatedly we can now prove the main result of this section.

\begin{proof}[Proof of \Tr{3colours}]
  Suppose, to the contrary, there is a vertex $x$ as described at the beginning of this section. Lemmas~\ref{m} and~\ref{n} yield vertices $m\in \bbp$ and $n\in \rrm$ with $m\not= n^+$. Applying \Lr{geil} yields a subpath $pCt$ of $mCn$ with $p\in \rrp$ and $t\in \bbm$. As, clearly, $p\not= t^+$, we can apply \Lr{geil} again, this time to $pCt$ instead of $mCn$ and with the roles of the colours red and blue interchanged, to obtain a subpath $m_1Cn_1$ of $pCt$ with $m_1\in \bbp$ and $n_1\in\rrm$. We can keep on applying \Cr{geil} again and again, to obtain a sequence of nested paths $mCn \geq m_1 C n_1 \geq m_2 C n_2 \ldots$, contradicting the fact that $D$ is finite.
\end{proof}

As discussed earlier, it is easy to see that a minimal counterexample to \Tr{main} is also a minimal counterexample to \Cnr{conj}. Thus \Tr{3colours} immediately implies \Tr{main}.

\acknowledgement{We would like to thank Henning Bruhn for very helpful discussions on this problem.}

\bibliographystyle{plain}
\bibliography{collective}

\end{document}